\newtheorem{thm}{Theorem}[section]
\newtheorem{defn}[thm]{Definition}
\newtheorem{lem}[thm]{Lemma}
\newtheorem{prop}[thm]{Proposition}
\newtheorem{cor}[thm]{Corollary}
\newcommand{\RR}{\mathbb{R}}      
\newcommand{\ZZ}{\mathbb{Z}}      
\newcommand{\NN}{\mathbb{N}}
\newcommand{\EE}{\mathbb{E}}     
\newcommand{\lfl}{\left\lfloor }  
\newcommand{\rfl}{\right\rfloor} 
\begin{document}

\title{Inversion of signature for paths of bounded variation}
\author{Terry J. Lyons \and Weijun Xu}

\maketitle

\abstract{
We develop two methods to reconstruct a path of bounded variation from its signature. The first method gives a simple and explicit expression of any axis path in terms of its signature, but it does not apply directlty to more general ones. The second method, based on an approximation scheme, recovers any tree-reduced path from its signature as the limit of a uniformly convergent sequence of lattice paths. 
}

\bigskip

\section{Introduction}

Paths have important roles in many areas of mathematics. A path is a continuous function $\gamma$ that maps a non-empty interval $J \subset \RR$ into a metric space $(V,d_{V})$. A basic property of a path $\gamma$ on an interval $J$ is its length, $|\gamma|_{J}$, which can be defined as follows: 
\begin{align*}
|\gamma|_{J}:= \sup_{\mathcal{P} \subset J}{\sum_{i}d_{V}(\gamma(u_i), \gamma(u_{i+1}))}, 
\end{align*}
where the supremum is taken over all finite partitions $\mathcal{P}=\{s=u_{0}< u_{1}< \cdots <u_{n}=t\}$ of the interval $J=[s,t]$. It is clear that $|\gamma|_{J}$ is independent of the choice of parametrization. When there is no possible confusion, we will drop the subscript $J$, and use $|\gamma|$ to denote the length of the whole path. 

The mesh of a partition $\mathcal{P}$, $\left\| \mathcal{P} \right\|$, is defined by $\left\| \mathcal{P} \right\|:= \max_{i}|u_{i+1}-u_{i}|$. By triangle inequality and the continuity of $\gamma$, one can equivalently define $|\gamma|_{J}$ by
\begin{align} \label{limit}
|\gamma|_{J} = \lim_{\left\| \mathcal{P} \right\| \rightarrow 0}{\sum_{i}d_{V}(\gamma(u_i), \gamma(u_{i+1}))}, 
\end{align}
where the limit exists independent of the actual partition as long as the mesh tends to $0$. 

If $V$ is a Banach space, one can rewrite $d_{V}(\gamma(u_{i}), \gamma(u_{i+1}))$ as $|\gamma(u_{i+1})-\gamma(u_i)|$, where $|\cdot|$ is the Banach space norm. 

Paths $\gamma: J \rightarrow V$ of finite length are denoted as elements of $BV_{J}(V)$; they are also called paths with bounded variation. 

For any path $\alpha: [0,s] \rightarrow V$ and $\beta: [0,t] \rightarrow V$, we can form the concatenation $\alpha * \beta: [0,s+t] \rightarrow V$, as follows: 
\begin{equation*} 
\alpha*\beta(u) := \left \{
\begin{array}{rl}
&\alpha(u), u \in [0,s]\\
&\beta(u-s)+\alpha(s)-\alpha(0), u \in [s,s+t]
\end{array} \right., 
\end{equation*}
and similarly, the decomposition of one path into two can be carried out in the same fashion. 

For any path $\gamma: [s,t] \rightarrow V$, the path "$\gamma$ run backwards", $\gamma^{-1}$, is defined as: 
\begin{align*}
\gamma^{-1}(u) :=\gamma (s+t-u), u \in [s,t], 
\end{align*}
and the trajectories of $\gamma * \gamma^{-1}$ cancel out each other. 

Concatenation and "backwards" of paths of bounded variation are still paths of bounded variation. In fact, we have $|\alpha * \beta| = |\alpha| + |\beta|$, and $|\gamma^{-1}| = |\gamma|$. 

If $\gamma \in BV(\RR^{d})$, then one can define a differential equation driven by $\gamma$ as follows: 
\begin{align} \label{differential equation}
dy(t) = f(y(t)) \cdot d\gamma(t),\qquad y(0)=a, 
\end{align}
where $f$ is a vector field. If $f$ is Lipschitz, then equation \eqref{differential equation} has a unique solution $y(t)$. One may seek the properties of $\gamma$ that determines the value $y(t)$ given the initial value $y(0)=a$, and this leads to the concept of the signature of a path (\cite{Lyons 1998}, \cite{Hambly and Lyons}): 

\begin{defn}
Let $\gamma: [s,t] \rightarrow V$ be a path of bounded variation, where $V$ is also a vector space with a countable basis. The signature of $\gamma$, $X_{s,t}(\gamma)$, is defined as: 
\begin{align*}
X_{s,t}(\gamma) = 1 + X_{s,t}^{1}(\gamma) + \cdots + X_{s,t}^{n}(\gamma) + \cdots,  
\end{align*}
where
\begin{align} \label{tensor product}
X_{s,t}^{n}(\gamma) = \int_{s<u_{1} \cdots < u_{n}<t}{d\gamma(u_1) \otimes \cdots \otimes d\gamma(u_{n})}
\end{align}
as an element in $V^{\otimes n}$. 
\end{defn}

Let $(e_{1}, e_{2}, \cdots, )$ be a basis of $V$, then $\gamma$ can be written as $(\gamma_{1}, \gamma_{2}, \cdots)$. If $w = e_{i_1} \cdots e_{i_n}$ be a word of length $n$, we write
\begin{align*}
C_{s,t}(w)=C_{s,t}(e_{i_1} \cdots e_{i_n}) = \int_{s < u_{1} < \cdots < u_{n} <t}{d\gamma_{i_1}(u_1) \cdots d\gamma_{i_n}(u_n)}
\end{align*}
as the coefficient of $w$. As all words of length $n$ form a basis of $V^{\otimes n}$, we can rewrite $X_{s,t}^{n}(\gamma)$ as the linear combination of basis elements: 
\begin{align} \label{power series}
X_{s,t}^{n}(\gamma) = \sum_{|w|=n}C_{s,t}(w)w, 
\end{align}
where the sum is taken over all words of length $n$. 

Reparametrizing $\gamma$ does not change its signature. The signature contains important information about the path. For example, the first term, $X_{s,t}^{1}$, produces the increment of $\gamma$. 

The study of the signature of a path dates back to Chen \cite{Chen 1957}, where he associated a noncommutative power series to each piecewise smooth path in $\RR^{d}$, or on a Riemannian manifold $M$, with iterated integrals along the path as coefficients. This power series, in the form of \eqref{power series}, was obtained by solving \eqref{differential equation} using Picard's interation for the case when $f$ is linear. He showed that the logarithm (as a power series) of this power series is a Lie element, and obtained a generalized Campbell-Baker-Hausdorff formula. In his subsequent work \cite{Chen 1958}, he proved that two irreducible\protect\footnote{Loosely speaking, a path is irreducible if no part of it can be written as the form $\gamma * \gamma^{-1}$.} piecewise smooth paths with the same starting point have the same associated power series if and only if they differ by a parametrization. Lator on, he used this power series as a basic tool to compute the homology and cohomology of various path spaces on a manifold. A detailed disccusion on the role of such iterated integrals in relating the analysis on a manifold and the homology of its path spaces can be found in \cite{Chen 1977}. 

Half a century later, Lyons \cite{Lyons 1998} formally introduced the notion of signature, and defined this power series as the signature of the path in the form of \eqref{tensor product}. He also generalized the multiplicativity of signautre (first discovered by Chen \cite{Chen 1957}) to the notion of multiplicative funtionals, and showed that any multiplicative functional in $T^{(1)}$ with finite $1$-variation must be the signature of some path of finite length. 

Hambly and Lyons \cite{Hambly and Lyons} introduced the notion of tree-like paths, which are generalizations of paths of the form $\gamma_{1} * \gamma_{1}^{-1} * \cdots * \gamma_{n} * \gamma_{n}^{-1}$. Two paths $\gamma, \tau \in BV(\RR^{d})$ are equivalent if $\gamma * \tau^{-1}$ is tree-like. This equivalence relation defines the quotient group $BV(\RR^{d})/ \sim$. It is a group with concatenation as the multiplication. Within every equivalent class, there is a unique path of minimal length, called the reduced path (or irreducible path). They used various methods in analysis and hyperbolic geometry to get quantitative estimates for Chen's theorem extended to paths of bounded variation, and obtained as a corollary that two paths in $\RR^{d}$ of bounded variation have the same signature if and only if they are equivalent. 

After the work of Hambly and Lyons, a natural question is how to reconstruct the reduced path of bounded variation from its signature. This is known as the inversion problem. Hambly and Lyons \cite{Hambly and Lyons} gave a formula to recover the length of $C^{3}$ paths parametrized at unit speed by looking at the asymptotic behavior of $\left\| X^{n}(\gamma) \right\|$. This assumption can also be weakened to paths with continuous derivatives, but the conclusions will also be weaker. 

In this paper, we develop two methods to invert the signature. The first method gives an explicit inversion formula for axis paths, which are generalizations of integer lattice paths. But this method depends on the special structure of axis paths, and does not apply directly to more general ones. 

The second method is based on approximation of paths of bounded variation by lattice paths. Suppose $X = (1, X^{1}, \cdots, X^{n}, \cdots)$ is the signature of a tree-reducedpath $\gamma$ with length $L$. We will show that one can construct a sequence of lattice paths $\hat{\gamma}^{(N)}$, with step size $\frac{1}{2^N}$ and length at most $L$, such that for every $n$, $X^{n}(\hat{\gamma}^{(N)})$ converges to $X^{n}$. One can show further that, when properly located and parametrized, the sequence $\hat{\gamma}^{(N)}$ converges uniformly to $\gamma$. Thus, this approximation scheme asymptotically recovers any path of bounded variation from its signature by approaching it uniformly using a sequence of lattice paths. At the heart of our argument is an estimate of the difference of the signatures of two paths in terms of their lengths and uniform distance (theorem \ref{bounding the difference of signatures in terms of uniform distance}).

The main results in this paper are the following three theorems: 

\begin{flushleft}
\textbf{Theorem \ref{inversion for axis paths}}  \textit{Fix a finite axis path $\gamma$. Let $w = e_{i_1} \cdots e_{i_{k}} \cdots e_{i_n}$ be the unique longest square free word such that $C(w) \neq 0$. Let $w_{k} = e_{i_1} \cdots e_{i_k}^{2} \cdots e_{i_n}$, then we can write $\gamma$ as
\begin{align*}
\gamma = r_{1}e_{i_1} + \cdots + r_{n}e_{i_n}, 
\end{align*}
where $r_k = \frac{2C(w_k)}{C(w)}$, and the sum is noncommutative. }
\end{flushleft}

\begin{flushleft}
\textbf{Theorem \ref{uniform approximation by lattice paths}} \textit{Let $\gamma: [0,1] \rightarrow \RR^{d}$ be a path with $l_1$ length $L$. For any integer $N$, there exists a lattice path $\hat{\gamma}^{(N)}: [0,1] \rightarrow \RR^{d}$ with step size $\frac{1}{2^{N}}$ and length at most $L$ such that 
\begin{align*}
|\hat{\gamma}^{(N)}(t)-\gamma(t)| \leq \frac{d}{2^{N}} 
\end{align*}
for all $t \in [0,1]$.}
\end{flushleft}

\begin{flushleft}
\textbf{Theorem \ref{bounding the difference of signatures in terms of uniform distance}}  \textit{Let $\alpha, \beta: [0,1] \rightarrow \RR^{d}$ be two paths of finite length with a common control $\omega$\protect\footnote{Path $\gamma$ has finite length controlled by $\omega$ if $|\gamma|_{(s,t)} \leq \omega(s,t)$ for all $s \leq t$. The control $\omega$ is assumed to be jointly continuous, and additive in the sense that $\omega(s,t) = \omega(s,u) + \omega(u,t)$ for all $s \leq u \leq t$. }, and $|\alpha(t)-\beta(t)|< \epsilon$ for all $t \in [0,1]$. Then, 
\begin{align*}
\left\|  X_{s,t}^{n}(\alpha) - X_{s,t}^{n}(\beta) \right\| < 2 \epsilon \cdot \frac{(4 \omega(s,t))^{n-1}}{(n-1)!}
\end{align*}
for $s, t \in (0,1)$ and all $n \in \NN$.}
\end{flushleft}

The direct applications of this paper are to paths in $BV(\RR^{d})$, but many theorems still hold in a general Banach space. We focus on paths of bounded variation. For results and a more general theory of rough paths, we refer to the original work of Lyons \cite{Lyons 1998} and two comprehensive introductions (\cite{Lyons and Qian} and \cite{St. Flour}) for more details. 

We follow the notations in \cite{Hambly and Lyons} and \cite{Lyons 1998}. We assume the length of a path to be its $l_{1}$ length, unless otherwise specified. If $\gamma$ is the only path in our concern, we will use $X_{s,t}$ instead of $X_{s,t}(\gamma)$ to denote its signature. When we refer to the signature of the whole path, we will drop the subscripts $s,t$, and use $X$. 

The symbol $|\cdot|$ has several different meanings in different contexts. If $w$ is a word, then $|w|$ denotes the length of the word. If $E$ is a set, then $|E|$ denotes the cardinality of $E$. Finally, if $\gamma$ is a path in a Banach space with $l^{p}$ norm, then $|\gamma|_{p}$ denotes the length of the path in the Banach space.

\bigskip

\section{Preliminary results on the signature}

One essential property of the signature is its multiplicativity. It was first discovered by Chen \cite{Chen 1957}. We state it in the lemma below. 
\begin{lem}
Let $\gamma:[s,t] \rightarrow V$ be a path of bounded variation. Then, for any $r \in [s,t]$, $X_{s,r} \otimes X_{r,t}=X_{s,t}$. 
\end{lem}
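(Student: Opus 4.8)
The plan is to reduce the statement to an identity between homogeneous components and then prove that identity by cutting the domain of integration at the point $r$. Since the product in the tensor algebra is concatenation and respects the grading, the degree-$n$ part of $X_{s,r} \otimes X_{r,t}$ is $\sum_{k=0}^{n} X_{s,r}^{k} \otimes X_{r,t}^{n-k}$, with the convention $X^{0} = 1$. Hence it suffices to prove, for every $n \geq 1$,
\[
X_{s,t}^{n} = \sum_{k=0}^{n} X_{s,r}^{k} \otimes X_{r,t}^{n-k}.
\]

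The key step is a geometric decomposition of the simplex $\{s < u_{1} < \cdots < u_{n} < t\}$ according to where the fixed point $r$ falls. For each $k \in \{0, 1, \ldots, n\}$, let $D_{k}$ be the subregion on which $u_{k} < r < u_{k+1}$, i.e. the first $k$ variables lie in $(s,r)$ and the remaining $n-k$ lie in $(r,t)$ (with the obvious reading for $k=0$ and $k=n$). The regions $D_{0}, \ldots, D_{n}$ cover the whole simplex, and any two of them overlap only on the hyperplanes $\{u_{i} = r\}$, which have Lebesgue measure zero; since $\gamma$ is continuous and of bounded variation it puts no mass on such lower-dimensional sets, so these overlaps contribute nothing to the iterated integral.

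On each piece $D_{k}$ both the integrand $d\gamma(u_{1}) \otimes \cdots \otimes d\gamma(u_{n})$ and the ordering constraints factor into a part involving only the variables in $(s,r)$ and a part involving only those in $(r,t)$. Applying a Fubini-type argument for iterated Riemann--Stieltjes integrals, the integral over $D_{k}$ separates as
\[
\left( \int_{s < u_{1} < \cdots < u_{k} < r} d\gamma(u_{1}) \otimes \cdots \otimes d\gamma(u_{k}) \right) \otimes \left( \int_{r < u_{k+1} < \cdots < u_{n} < t} d\gamma(u_{k+1}) \otimes \cdots \otimes d\gamma(u_{n}) \right) = X_{s,r}^{k} \otimes X_{r,t}^{n-k}.
\]
Summing over $k = 0, \ldots, n$ yields the displayed identity, and hence $X_{s,r} \otimes X_{r,t} = X_{s,t}$.

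The main obstacle is the rigorous justification of the factorization on $D_{k}$: that a multiple integral over an ordered domain may be split across the cut at $r$ with the left and right blocks evaluated independently. For a path of bounded variation this rests on the Fubini theorem for Young/Riemann--Stieltjes integrals together with the negligibility of the diagonal sets where some $u_{i}$ coincides with $r$. An alternative route, which sidesteps the measure-theoretic bookkeeping, is to fix $s$ and $r$ and regard both sides as functions of the upper limit $t \geq r$: both satisfy the linear equation $Z_{t} = Z_{r} + \int_{r}^{t} Z_{u} \otimes d\gamma(u)$ with the same initial value $Z_{r} = X_{s,r}$ (using $X_{r,r} = 1$), so they agree by uniqueness of the solution, which one verifies degree by degree by induction on $n$.
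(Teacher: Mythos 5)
Your argument is correct and is exactly the one the paper indicates: it states that the lemma follows ``by an application of Fubini's theorem, after partitioning the domain of integration into disjoint parts,'' which is precisely your decomposition of the simplex $\{s<u_1<\cdots<u_n<t\}$ into the regions $D_k$ according to where $r$ falls, followed by factorization of each piece. Your additional remarks on the measure-zero overlaps and the alternative ODE-uniqueness route go beyond what the paper records, but the core approach is the same.
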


The proof is an application of Fubini's theorem, after partitioning the domain of integration into $n$ disjoint parts. 

As mentioned in the introduction, the signature characterizes important information about the path. Hambly and Lyons \cite{Hambly and Lyons} recently proved that paths of bounded variations are completely characterized by their signatures up to tree-like equivalence. To state their theorem precisely, we first need a mathematical characterization of tree-like path, which was first introduced in \cite{Hambly and Lyons notes}, and then used in \cite{Hambly and Lyons}. 

\begin{defn}
$\gamma: [0,1] \rightarrow V$ is a tree-like path if there exists a continuous function $h: [0,1] \rightarrow \RR^{+}$ such that 
\begin{align*}
\left\| \gamma(t) - \gamma(s) \right\| \leq h(s) + h(t) - 2 \inf_{u \in [s,t]}h(u). 
\end{align*}
\end{defn}

The function $h$ is called the height function. Given a metric on the tree, one can think of $h(t)$ as the distance from $\gamma(0)$ to $\gamma(t)$ under that metric. More discussions on paths and trees can be found in \cite{Hambly and Lyons notes}. 

If $\gamma$ has bounded variation, then $h$ can be chosen to have bounded variation (theorem 12 of \cite{Hambly and Lyons}). With the aid of this characterization, they proved the following main theorem: 

\begin{thm}
Let $\alpha, \beta \in BV(\RR^{d})$. Then, $\alpha$ and $\beta$ have the same signature if and only if $\alpha * \beta^{-1}$ is tree-like. 
\end{thm}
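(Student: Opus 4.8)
The plan is to convert the two-sided statement into a single statement about paths with trivial signature, and then handle the two resulting implications by quite different means. \emph{Reduction.} By the multiplicativity lemma, $X(\alpha * \beta^{-1}) = X(\alpha) \otimes X(\beta^{-1})$. A standard direct computation shows that every out-and-back path has trivial signature, $X(\gamma * \gamma^{-1}) = 1$, the iterated integrals over the outgoing and returning portions cancelling in pairs; applying this with $\gamma = \beta$ and invoking multiplicativity gives $X(\beta^{-1}) = X(\beta)^{-1}$ among the group-like elements of the tensor algebra. Consequently
\begin{align*}
X(\alpha) = X(\beta) \iff X(\alpha) \otimes X(\beta)^{-1} = 1 \iff X(\alpha * \beta^{-1}) = 1 .
\end{align*}
Thus it suffices to prove, for $\eta = \alpha * \beta^{-1} \in BV(\RR^{d})$, that $X(\eta) = 1$ if and only if $\eta$ is tree-like, and this reformulation is the crux of the theorem.

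\emph{Tree-like implies trivial signature.} Here I would work directly with the height function. By Theorem 12 of \cite{Hambly and Lyons} one may take $h$ of bounded variation, and the defining inequality $\|\eta(t) - \eta(s)\| \le h(s) + h(t) - 2\inf_{u \in [s,t]} h(u)$ forces $\eta$ to backtrack exactly on every descent of $h$: on an interval where $h$ decreases, $\eta$ must retrace the arc it traversed while $h$ was increasing. Approximating $h$ by piecewise-monotone height functions then exhibits $\eta$ as a uniform limit, under a fixed length bound, of finite concatenations of out-and-back loops, each of which has trivial signature. Passing to the limit level by level, using the continuity of each signature component in the uniform distance under a common control (cf. Theorem \ref{bounding the difference of signatures in terms of uniform distance}), yields $X(\eta) = 1$.

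\emph{Trivial signature implies tree-like, the main obstacle.} This is the hard direction, since a height function must be manufactured from the single algebraic identity $X(\eta) = 1$. The approach I would follow is the hyperbolic development method: solve the linear equation $dM(t) = M(t)\,A(d\eta(t))$ in $SL_2(\RR)$ for a suitable linear map $A$ from $\RR^{d}$ into the Lie algebra, thereby developing $\eta$ into the hyperbolic plane $\mathbb{H}$ from a base point $o$. Triviality of the signature forces the development to close up, and varying $A$ over a rich family constrains the developed curve severely; one then defines a candidate height $h(t)$ from the geodesic distances $d_{\mathbb{H}}(o, M(t)\cdot o)$ optimized over this family, reading off the tree-like inequality from the thin-triangle geometry of $\mathbb{H}$ and the boundedness of $h$ from the length bound on $\eta$. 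I expect this to be the genuine difficulty: converting the purely algebraic condition $X(\eta)=1$ into metric information—controlling the developments uniformly in $A$, extracting a single bounded-variation height function, and verifying the defining inequality—requires the quantitative hyperbolic estimates of Hambly and Lyons. Neither implication in isolation is the hard part; it is precisely this synthesis that constitutes the heart of the argument.
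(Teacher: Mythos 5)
The first thing to note is that the paper does not prove this statement at all: it is the uniqueness theorem of Hambly and Lyons, quoted verbatim from \cite{Hambly and Lyons} and used here purely as an input (it is what guarantees, in the final inversion step, that the uniform limit $\hat{\gamma}$ of the lattice approximations actually coincides with $\gamma$). So there is no internal proof to compare yours against; the relevant comparison is with the argument in \cite{Hambly and Lyons} itself.

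Measured against that, your write-up correctly identifies the architecture --- the reduction to ``$X(\eta)=1$ if and only if $\eta$ is tree-like'' via multiplicativity and invertibility of group-like elements, the split into an easy and a hard implication, and hyperbolic development as the engine of the hard one --- but it is a roadmap, not a proof. In the easy direction the retracing claim is sound for a piecewise-monotone height function (if $h(s')=h(t)=\inf_{[s',t]}h$ the defining inequality forces $\eta(s')=\eta(t)$ exactly), but the limiting step is where the content lies: to invoke Theorem \ref{bounding the difference of signatures in terms of uniform distance}, or any comparable continuity statement, you need the approximating concatenations of loops and $\eta$ to admit a common control, and extracting that from a merely bounded-variation height function is precisely the technical work done in \cite{Hambly and Lyons}. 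In the hard direction you do not give an argument at all: the paragraph describes the method and then states that carrying it out ``requires the quantitative hyperbolic estimates of Hambly and Lyons.'' Deferring the crux of the theorem to the paper that proves the theorem is a citation, not a proof. If citation was the intent, that is exactly what the present paper does, and the honest form of your answer is a single line referring to the main theorem of \cite{Hambly and Lyons}; otherwise the entire quantitative content of the hard direction --- uniform control of the developments over the family of linear maps $A$, the lower bound on the hyperbolic displacement in terms of the length of the non-tree-like part, and the construction and verification of a bounded-variation height function --- remains to be supplied.
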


We say two paths $\alpha$ and $\beta$ are equivalent if $\alpha * \beta^{-1}$ is tree-like, denoted by $\alpha \sim \beta$. It is clear that paths differing by a re-parametrization are equivalent. By the properties of tensors and multiplicativity of the signature, one can immediately conclude the following from the main theorem: 

\begin{cor}
$\sim$ defines an equivalence relation on $BV(\RR^{d})$. 
\end{cor}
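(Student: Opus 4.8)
The plan is to transport the problem through the signature map and thereby reduce it to the triviality that equality is an equivalence relation. By the main theorem stated above, two paths $\alpha, \beta \in BV(\RR^d)$ satisfy $\alpha \sim \beta$ — that is, $\alpha * \beta^{-1}$ is tree-like — precisely when $X(\alpha) = X(\beta)$. Hence $\sim$ is nothing other than the pullback under the map $\gamma \mapsto X(\gamma)$ of the equality relation on the tensor algebra, and it therefore inherits reflexivity, symmetry and transitivity directly from equality.

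Concretely, I would verify the three axioms as follows. For reflexivity, $X(\alpha) = X(\alpha)$ holds trivially, so $\alpha * \alpha^{-1}$ is tree-like and $\alpha \sim \alpha$. For symmetry, if $\alpha \sim \beta$ then $X(\alpha) = X(\beta)$ by the main theorem; since equality is symmetric, $X(\beta) = X(\alpha)$, and applying the theorem in the other direction yields that $\beta * \alpha^{-1}$ is tree-like, i.e. $\beta \sim \alpha$. For transitivity, $\alpha \sim \beta$ and $\beta \sim \delta$ give $X(\alpha) = X(\beta)$ and $X(\beta) = X(\delta)$, whence $X(\alpha) = X(\delta)$, and the theorem returns $\alpha \sim \delta$.

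It is worth recording where the \emph{properties of tensors and multiplicativity} enter, since this is the only place genuine content is needed. If one insists on arguing directly at the level of tree-like paths rather than passing through signatures, the two structural facts one must establish are that the reverse of a tree-like path is tree-like and that a concatenation of tree-like paths is tree-like; combined with $X(\gamma^{-1}) = X(\gamma)^{-1}$ and multiplicativity $X(\alpha * \beta) = X(\alpha) \otimes X(\beta)$, these handle symmetry (via $\beta * \alpha^{-1} = (\alpha * \beta^{-1})^{-1}$) and transitivity (by inserting the cancelling segment $\beta^{-1} * \beta$, so that $\alpha * \delta^{-1}$ is identified with the concatenation $(\alpha * \beta^{-1}) * (\beta * \delta^{-1})$ of two tree-like pieces). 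I expect transitivity to be the only mildly delicate point in such a direct treatment, but the signature reformulation above dissolves it entirely, so the corollary is genuinely immediate once the main theorem is available.
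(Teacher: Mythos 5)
Your proposal is correct and follows exactly the route the paper intends: the corollary is stated immediately after the main theorem with the remark that it follows from it together with multiplicativity, i.e.\ one identifies $\sim$ with equality of signatures and inherits reflexivity, symmetry and transitivity from equality. Your additional discussion of the direct tree-like argument is a fine sanity check but is not needed once the main theorem is in hand.
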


This equivalence relation partitions $BV(\RR^{d})$ into equivalent classes. Within each class, there is a unique path with minimal length. 

\begin{cor}
For any $\gamma \in BV(\RR^{d})$, there exists a unique path $\tilde{\gamma}$ such that $X(\tilde{\gamma}) = X(\gamma)$, and if $\beta$ is any other path with the same signature, then $|\tilde{\gamma}| < |\beta|$. 
\end{cor}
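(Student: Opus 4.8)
The plan is to prove existence by the direct method of the calculus of variations and uniqueness through the tree-like structure. Set $L := \inf\{|\beta| : \beta \sim \gamma\}$, which is finite since $\gamma$ itself lies in its class, and nonnegative. Because the signature depends only on the increments of a path (see \eqref{tensor product}), it is invariant under both reparametrization and translation, so I may take a minimizing sequence $\beta_n \sim \gamma$ with $|\beta_n| \to L$, all parametrized on $[0,1]$ at constant speed $|\beta_n|$ and all based at the origin. Discarding finitely many terms gives $|\beta_n| \le L' := L+1$, so the $\beta_n$ are equi-Lipschitz and uniformly bounded on $[0,1]$; by Arzel\`a--Ascoli a subsequence converges uniformly to some $\tilde{\gamma}$.

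Two facts then identify $\tilde{\gamma}$ as a minimizer lying in the class. First, length is lower semicontinuous under uniform convergence, so $|\tilde{\gamma}| \le \liminf_n |\beta_n| = L$, and the same estimate on every subinterval shows that $\omega(s,t) := L'(t-s)$ is a common control for all the $\beta_n$ and for $\tilde{\gamma}$. Second---and this is where the machinery of this paper enters---Theorem \ref{bounding the difference of signatures in terms of uniform distance}, applied with this common control, gives for each fixed $n$ the bound $\|X^{n}(\beta_k) - X^{n}(\tilde{\gamma})\| \le 2\,\|\beta_k - \tilde{\gamma}\|_{\infty}\,(4\omega(0,1))^{n-1}/(n-1)! \to 0$. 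Since $X^{n}(\beta_k) = X^{n}(\gamma)$ for every $k$, this forces $X(\tilde{\gamma}) = X(\gamma)$, hence $\tilde{\gamma} \sim \gamma$ and so $|\tilde{\gamma}| \ge L$; together with lower semicontinuity this yields $|\tilde{\gamma}| = L$, so the infimum is attained.

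For uniqueness and the strict inequality, I would first show any minimizer is tree-reduced: if $\tilde{\gamma}$ contained a tree-like sub-loop $\tilde{\gamma}|_{[u,v]}$ of positive length, so that $\tilde{\gamma}(u) = \tilde{\gamma}(v)$ and $X_{u,v} = 1$, then deleting it produces $\tilde{\gamma}|_{[0,u]} * \tilde{\gamma}|_{[v,1]}$, whose signature equals $X_{0,u} \otimes 1 \otimes X_{v,1} = X(\tilde{\gamma})$ by multiplicativity; this is an equivalent path of strictly smaller length, contradicting minimality. It then remains to prove that two equivalent tree-reduced paths are reparametrizations of one another. Granting this, any $\beta \sim \gamma$ that is not such a reparametrization of $\tilde{\gamma}$ must satisfy $|\beta| > L = |\tilde{\gamma}|$, which is exactly the asserted strict inequality.

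The main obstacle is this last step. If $\beta_1, \beta_2$ are equivalent and tree-reduced, then by the theorem of Hambly and Lyons above, $\delta := \beta_1 * \beta_2^{-1}$ is a tree-like loop based at the common endpoint, carrying a bounded-variation height function $h$. I would pass to the real tree obtained by collapsing $[0,2]$ under the pseudometric induced by $h$; tree-reducedness of each half should force it to map onto a non-backtracking (geodesic) arc in this tree, and the only way the full concatenation can be contractible is for the two arcs to coincide with opposite orientations, which translates back into the statement that $\beta_2$ is a reparametrization of $\beta_1$. Making this rigorous---in particular handling the plateaux of $h$ and checking that the excursion decomposition is compatible with the original parametrizations---is the delicate point, and is precisely the analysis carried out by Hambly and Lyons; I expect to invoke their result here rather than reprove it from scratch.
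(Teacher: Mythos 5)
The paper offers no proof of this corollary at all: it is presented as an immediate consequence of the Hambly--Lyons uniqueness theorem quoted just above it (their Annals paper establishes both the existence and the uniqueness of the minimal-length element of each equivalence class, i.e.\ the reduced path group structure). Your proposal is essentially correct but takes a genuinely more self-contained route for the existence half. Your direct-method argument --- constant-speed reparametrization based at the origin, Arzel\`a--Ascoli, lower semicontinuity of length, the linear control $\omega(s,t)=L'(t-s)$ inherited by the limit, and then Theorem \ref{bounding the difference of signatures in terms of uniform distance} to pass the signature to the limit --- is sound and is not circular, even though it forward-references a theorem proved only in Section 4 of the paper (that theorem's proof does not use this corollary). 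Two small remarks. First, uniqueness here can only mean uniqueness up to translation and reparametrization, since the signature sees neither; your normalization handles this implicitly, but it is worth stating. Second, your uniqueness step is slightly roundabout: you show minimizers admit no tree-like sub-loop and then want ``two equivalent paths with no tree-like sub-loops are reparametrizations of one another,'' which is not quite the form in which Hambly--Lyons state their rigidity result (and the equivalence of ``no deletable tree-like excursion on a subinterval'' with minimality is itself part of their delicate analysis). Since you are invoking their theorem anyway, it is cleaner to cite their statement that each equivalence class contains a unique path of minimal length directly; the strict inequality $|\tilde{\gamma}|<|\beta|$ for any inequivalent-as-parametrized-curves $\beta$ then follows at once, and your sub-loop deletion observation becomes a pleasant but unnecessary aside. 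In short: your existence argument adds genuine content beyond the paper's bare citation, and your uniqueness argument correctly identifies, and honestly defers to Hambly--Lyons for, the one step that cannot be done with the tools of this paper.
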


The shortest path $\tilde{\gamma}$ corresponds to irreducible path in the finite case. We call it the tree-reduced path of $\gamma$. It is then natural to ask, how can one reconstruct the tree-reduced path of bounded variation from its signature. Hambly and Lyons showed that, one can recover the length of $\gamma$ by looking at the asymptotic behavior of $X_{s,t}^{n}$, provided that $\gamma$ is smooth enough ($C^{3}$): 

\begin{thm}
Let $\gamma: J \rightarrow \RR^{d}$ be a $C^{3}$ path of length $L$ parametrized at unit speed. Its signature is $X = (1, X^{1}, \cdots, X^{n}, \cdots)$. If $T^{n}$ is given the projective norm, then
\begin{align*}
\lim_{n \rightarrow \infty}{ \left\| \frac{n!   X^{n} }{L^{n}} \right\| } =1. 
\end{align*}
If $T^{n}$ is given the Hilbert Schmidt norm, then the limit 
\begin{align*}
\lim_{n \rightarrow \infty} {\left\|  \frac{n! X^{n}}{L^{n}}  \right\|^{2}} = \EE [ \exp(\int_{0}^{1}{|W_{s}^{0}|^{2} \left\langle \gamma'(s), \gamma'''(s) \right\rangle ds}) ] \leq 1, 
\end{align*}
exists, where $W_{s}^{0}$ is the Brownian bridge in time $[0,1]$ starting and finishing at zero, and $\left\langle \cdot, \cdot \right\rangle$ is the standard inner product on $\RR^{d}$. The limit is strictly less than 1 unless $\gamma$ is a straight line. 
\end{thm}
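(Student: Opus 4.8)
The plan is to treat the two norms in parallel, extracting both limits from a single representation of $X^n$ as an average of rank-one tensors over the simplex. Parametrize $\gamma$ by arc length so that $|\gamma'|\equiv 1$, set $\Delta_n=\{0<u_1<\cdots<u_n<L\}$, and recall $X^n=\int_{\Delta_n}\gamma'(u_1)\otimes\cdots\otimes\gamma'(u_n)\,du$. A uniform point of $\Delta_n$ has the law of the order statistics of $n$ i.i.d.\ uniform variables on $[0,L]$, and $\mathrm{vol}(\Delta_n)=L^n/n!$, so $\tfrac{n!}{L^n}X^n=\EE_U[\,\gamma'(U_1)\otimes\cdots\otimes\gamma'(U_n)\,]$ with $U=(U_1<\cdots<U_n)$ these order statistics. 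Writing $T_U$ for the (unit-norm, rank-one) tensor $\gamma'(U_1)\otimes\cdots\otimes\gamma'(U_n)$, the projective \emph{upper} bound is then immediate: each $\|T_U\|_{\mathrm{proj}}=\prod_i|\gamma'(U_i)|=1$, so by subadditivity $\tfrac{n!}{L^n}\|X^n\|_{\mathrm{proj}}\le\EE_U\|T_U\|_{\mathrm{proj}}=1$. The entire difficulty of the projective statement is the matching lower bound.

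\textbf{Hilbert--Schmidt limit.} The same representation gives $\tfrac{(n!)^2}{L^{2n}}\|X^n\|_{\mathrm{HS}}^2=\EE_{U,V}\big[\prod_{i=1}^n\langle\gamma'(U_i),\gamma'(V_i)\rangle\big]$ for independent copies $U,V$. Differentiating $|\gamma'|^2\equiv1$ gives $\langle\gamma',\gamma''\rangle=0$ and $\langle\gamma',\gamma'''\rangle=-|\gamma''|^2$, so the exact identity $\langle\gamma'(u),\gamma'(v)\rangle=1-\tfrac12|\gamma'(u)-\gamma'(v)|^2$ and a Taylor expansion yield $\langle\gamma'(U_i),\gamma'(V_i)\rangle=1-\tfrac12|\gamma''(\tfrac{iL}{n})|^2(U_i-V_i)^2$ plus higher-order terms. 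The key probabilistic input is Donsker's theorem for the uniform quantile process: $\sqrt{n}\,(U_i-\tfrac{iL}{n})$ converges weakly, as a process in $i/n$, to $L$ times a Brownian bridge $\beta^U$, and likewise to an independent $\beta^V$. Taking logarithms and passing the resulting Riemann sum to its limit gives, in probability, $\prod_i\langle\gamma'(U_i),\gamma'(V_i)\rangle\to\exp\!\big(-\tfrac{L^2}{2}\int_0^1|\gamma''(sL)|^2(\beta^U_s-\beta^V_s)^2\,ds\big)$. Since $\beta^U-\beta^V\eqd\sqrt2\,W^0$ and $|\gamma''|^2=-\langle\gamma',\gamma'''\rangle$, the exponent is $\eqd L^2\int_0^1|W^0_s|^2\langle\gamma'(sL),\gamma'''(sL)\rangle\,ds$, which up to the arc-length reparametrization is the stated $\int_0^1|W^0_s|^2\langle\gamma'(s),\gamma'''(s)\rangle\,ds$. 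Because each factor lies in $[-1,1]$, the product is bounded, so bounded convergence upgrades the in-probability limit to convergence of $\EE_{U,V}$ and delivers the formula; the integrand $\langle\gamma',\gamma'''\rangle=-|\gamma''|^2\le 0$ forces the limit to be $\le 1$, with equality iff $\gamma''\equiv 0$, i.e.\ $\gamma$ a straight line. The $C^3$ hypothesis is precisely what makes the Taylor remainder controllable uniformly and the higher-order sum vanish.

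\textbf{Projective lower bound (the main obstacle).} I expect the real work to be $\liminf\tfrac{n!}{L^n}\|X^n\|_{\mathrm{proj}}\ge 1$. Since the projective norm dominates every cross-norm, $\tfrac{n!}{L^n}\|X^n\|_{\mathrm{proj}}\ge\tfrac{n!}{L^n}\|X^n\|_{\mathrm{HS}}\to\sqrt{\EE[\exp(\cdots)]}$, which is \emph{strictly below} $1$ unless $\gamma$ is straight; hence the Hilbert--Schmidt bound cannot close the gap. By duality $\tfrac{n!}{L^n}\|X^n\|_{\mathrm{proj}}=\sup_{\|\phi\|_{\epsilon}\le1}\langle\phi,\EE_U T_U\rangle$, with $\|\cdot\|_{\epsilon}$ the injective norm. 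A product functional $\phi=\psi_1\otimes\cdots\otimes\psi_n$ has $\|\phi\|_{\epsilon}=\prod|\psi_i|$, and the same second-moment expansion shows that even the optimal deterministic choice $\psi_i=\gamma'(\tfrac{iL}{n})$ only reaches $\exp(-\tfrac{L^2}{2}\int_0^1|\gamma''|^2\,s(1-s)\,ds)<1$: a rank-one functional cannot track the fluctuating direction $\gamma'(U_i)$. The direct telescoping estimate $\|T_U-T_*\|_{\mathrm{proj}}\le\sum_i|\gamma'(U_i)-\gamma'(\tfrac{iL}{n})|$ against the diagonal tensor $T_*=\gamma'(\tfrac{L}{n})\otimes\cdots\otimes\gamma'(L)$ fails for the same reason, since the accumulated fluctuation of the quantile process is of order $\sqrt n$. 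Thus one must produce genuinely \emph{entangled} dual functionals that are flat to first order along the diagonal, so as to absorb this $O(\sqrt n)$ transverse wandering without penalty, or equivalently localize $X^n$ to a diagonal neighborhood and estimate the projective norm there directly. Constructing such a functional (or localization) that recovers the constant $1$ is the crux of the argument, and is where I would concentrate the effort.
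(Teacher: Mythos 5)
This theorem is not proved in the paper at all: it is quoted verbatim from Hambly and Lyons \cite{Hambly and Lyons} as background for the inversion problem, so there is no in-paper argument to compare against; the relevant comparison is with the original source. Against that benchmark, your treatment of the easy half is fine. Writing $\frac{n!}{L^n}X^n$ as an average of unit rank-one tensors over the order statistics of i.i.d.\ uniforms gives $\limsup_n \frac{n!}{L^n}\|X^n\|_{\mathrm{proj}}\le 1$ immediately, and your Hilbert--Schmidt computation — the double expectation $\EE_{U,V}\prod_i\langle\gamma'(U_i),\gamma'(V_i)\rangle$, the identities $\langle\gamma',\gamma''\rangle=0$ and $\langle\gamma',\gamma'''\rangle=-|\gamma''|^2$, the quantile process converging to a Brownian bridge, and $\beta^U-\beta^V\eqd\sqrt{2}\,W^0$ — is exactly the mechanism behind the second limit, with the $C^3$ hypothesis controlling the Taylor remainders and bounded convergence closing the argument. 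That sketch could be turned into a complete proof of the Hilbert--Schmidt statement.

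The genuine gap is that the first displayed claim of the theorem, $\lim_n\|n!X^n/L^n\|_{\mathrm{proj}}=1$, is not proved: you establish only the upper bound and then, in your own words, leave the matching lower bound as ``where I would concentrate the effort.'' Your diagnosis of the obstruction is accurate — the Hilbert--Schmidt norm is a cross-norm dominated by the projective norm, so it yields a lower bound that is strictly below $1$ for any non-straight path, and rank-one dual functionals of injective norm one cannot track the $O(\sqrt{n})$ wandering of the order statistics — but correctly identifying why the naive approaches fail is not a substitute for an argument that succeeds. The lower bound is the substantive content of the projective-norm assertion, and in Hambly and Lyons it requires a separate, nontrivial construction rather than a refinement of the simplex-averaging picture; as written, your proposal proves a strictly weaker statement (the $\le$ inequality plus the Hilbert--Schmidt limit) and should not be presented as a proof of the theorem.
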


This is a very strong result obtained by making strong assumptions. One could weaken the $C^{3}$ condition and get a weaker result. This is the following theorem. It is also proved in \cite{Hambly and Lyons}. 

\begin{thm}
Let $\gamma$ be a path of length $L$ parametrized at unit speed. Suppose its derivative is continuous. Let $b_{n} = \left\| n! X^{n}(\gamma) \right\|$, where $\left\| \cdot \right\|$ is the projective tensor norm. Then, the Poisson averages $C_{\alpha}$ of the $b_{n}$'s defined by
\begin{align*}
C_{\alpha}:= e^{- \alpha} \sum_{n=0}^{+ \infty}\frac{\alpha^{n}}{n!} b_{n}
\end{align*}
satisfy
\begin{align*}
\lim_{\alpha \rightarrow \infty} \frac{1}{\alpha} \log C_{\alpha} = L-1. 
\end{align*}
\end{thm}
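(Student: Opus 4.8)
The plan is to rewrite the averaged quantity. Since $b_n = \|n!\,X^n\| = n!\,\|X^n\|$, the weights telescope: $\frac{\alpha^n}{n!}b_n=\alpha^n\|X^n\|$, so $C_\alpha=e^{-\alpha}\sum_{n\ge0}\alpha^n\|X^n\|$, and the assertion is equivalent to $\frac1\alpha\log\sum_{n\ge0}\alpha^n\|X^n\|\to L$. I would establish this from the two-sided estimate $L^n e^{-n\epsilon_n}\le b_n\le L^n$ with $\epsilon_n\to0$, fed into an elementary Laplace-type summation. The upper bound is immediate from the factorial decay of the signature, $\|X^n\|\le|\gamma|^n/n!=L^n/n!$: it gives $b_n\le L^n$ and hence $C_\alpha\le e^{-\alpha}\sum_n\frac{(\alpha L)^n}{n!}=e^{\alpha(L-1)}$, so that $\limsup_\alpha\frac1\alpha\log C_\alpha\le L-1$.

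The substance is the lower bound on $b_n$, which I would obtain by pairing $X^n$ against a product functional adapted to the path. Parametrize by arc length on $[0,L]$, set $\tau_i=iL/n$, and let $\theta(s)\in(\RR^d)^*$ be a norming functional for $\gamma'(s)$ (for the $l^1$ length, $\theta(s)=\mathrm{sgn}\,\gamma'(s)$, so $\|\theta(s)\|_\infty=1$ and $\langle\gamma'(s),\theta(s)\rangle=1$). Since the dual of the projective norm is the injective norm, the tensor $\Phi_n=\theta(\tau_1)\otimes\cdots\otimes\theta(\tau_n)$ has dual norm $1$, whence
\[
\|X^n\|\ \ge\ \langle X^n,\Phi_n\rangle\ =\ \int_{0<t_1<\cdots<t_n<L}\prod_{i=1}^n\langle\gamma'(t_i),\theta(\tau_i)\rangle\,dt\ =\ \frac{L^n}{n!}\,\EE\Big[\prod_{i=1}^n g_i(U_{(i)})\Big],
\]
where $g_i(s)=\langle\gamma'(s),\theta(\tau_i)\rangle\le1$ and $U_{(1)}<\cdots<U_{(n)}$ are the order statistics of $n$ i.i.d.\ uniform points on $[0,L]$. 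The heuristic is that $U_{(i)}$ concentrates at $\tau_i$, where $g_i$ attains its maximum value $1$, so by uniform continuity of $\gamma'$ each factor is close to $1$.

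The main obstacle is that the product has $n$ factors each carrying a small error, and I must prevent these from accumulating into an exponential loss while also controlling the sign-indefiniteness of $g_i$ away from the diagonal. I would fix $\delta_0$ with $\rho(\delta_0)\le\tfrac12$, where $\rho$ is the modulus of continuity of $\gamma'$, so that on the event $A_0=\{\max_i|U_{(i)}-\tau_i|\le\delta_0\}$ every $g_i(U_{(i)})\ge\tfrac12$ and hence $\prod_i g_i(U_{(i)})\ge\exp\!\big(-2\sum_i(1-g_i(U_{(i)}))\big)$. The Dvoretzky--Kiefer--Wolfowitz inequality gives $\PP(A_0^c)\le 2e^{-cn}$, while $\sum_i\EE[\,1-g_i(U_{(i)})\,]=o(n)$: this is a Cesàro sum of quantities $\EE[1-g_i(U_{(i)})]\to0$, with uniform continuity plus concentration of $U_{(i)}$ at $\tau_i$ controlling the bulk and the boundedness of $1-g_i$ controlling the tails. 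Combining these by Jensen's inequality on $A_0$ yields $\EE\big[\prod_i g_i(U_{(i)})\big]\ge e^{-o(n)}-\PP(A_0^c)\ge e^{-o(n)}$, that is, $b_n\ge L^n e^{-n\epsilon_n}$ with $\epsilon_n\to0$.

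Finally I would convert these bounds into the stated limit. For any $\epsilon>0$ choose $n_0$ with $\epsilon_n<\epsilon$ for $n\ge n_0$; then $\sum_n\frac{\alpha^n}{n!}b_n\ge\sum_{n\ge n_0}\frac{(\alpha Le^{-\epsilon})^n}{n!}=e^{\alpha Le^{-\epsilon}}-O(\alpha^{n_0})$, so $\liminf_\alpha\frac1\alpha\log C_\alpha\ge Le^{-\epsilon}-1$, and letting $\epsilon\to0$ gives $\liminf\ge L-1$. Together with the upper bound this proves $\lim_\alpha\frac1\alpha\log C_\alpha=L-1$. I expect the genuinely delicate step to be the $o(n)$ control of the accumulated error combined with the good-event truncation in the lower bound; the reduction, the factorial upper bound, and the concluding summation are routine by comparison.
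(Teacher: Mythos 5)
This theorem is one the paper quotes from Hambly--Lyons and does not prove (the text says explicitly ``It is also proved in \cite{Hambly and Lyons}''), so there is no in-paper argument to compare against; I can only assess your proposal on its own terms, and it appears to be correct and self-contained. The reduction $C_\alpha=e^{-\alpha}\sum_n\alpha^n\|X^n\|$ and the upper bound $b_n\le L^n$ are routine, and your lower bound is sound: pairing $X^n$ against the elementary tensor $\theta(\tau_1)\otimes\cdots\otimes\theta(\tau_n)$ is legitimate because the dual of the projective norm assigns such a tensor norm $\prod_i\|\theta(\tau_i)\|_*=1$; the order-statistics identity is exact; the bound $1-g_i(s)=\langle\gamma'(\tau_i)-\gamma'(s),\theta(\tau_i)\rangle\le\rho(|s-\tau_i|)$ uses only continuity of $\gamma'$, not of $s\mapsto\theta(s)$ (which for the $l^1$ norming functional would fail); the DKW step correctly converts $\sup_x|F_n(x)-F(x)|\le\delta_0/L$ into $\max_i|U_{(i)}-\tau_i|\le\delta_0$ via $F_n(U_{(i)})=i/n$; and the exponential decay of $\PP(A_0^c)$ is genuinely needed to beat the subexponential factor $e^{-o(n)}$ (a polynomial tail bound would not suffice), which you correctly supply. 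Two small points deserve one more line each in a final write-up: the conditional Jensen step should note $\EE[S_n\mid A_0]\le\EE[S_n]/\PP(A_0)$ using $S_n\ge0$, and the off-event contribution should be bounded by $-\PP(A_0^c)$ since the product can be negative there. It is worth observing that your argument actually establishes the stronger pointwise statement $b_n\ge L^ne^{-o(n)}$, hence $b_n^{1/n}\to L$, of which the Poisson-average limit is an immediate corollary; the averaging plays no essential role in your route, whereas in the Hambly--Lyons treatment the exponential generating function arises naturally from factorizing the pairing over blocks on which $\gamma'$ is nearly constant. So your proof is, if anything, a sharper and more probabilistic alternative to the cited one.
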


In the next two sections, we will develop methods to recover the tree reduced path itself from its signature.

\bigskip

\section{Inversion for axis paths}

In this section, we give an expression of any finite axis path in $\RR^{d}$ in terms of its signature. 

\begin{defn}
$\gamma: [s,t] \rightarrow \RR^{d}$ is a (finite) axis path if its movements are parallel to the Euclidean coordinate axes, has finitely many turns, and each straight line component has finite length. 
\end{defn}

Any axis path has a unique reduced axis path; integer lattice paths are special cases of axis paths. An $\RR^{d}$ axis path can move in $d$ different directions (up to the sign). At time $0$, it starts to move along a direction $e_{i_1}$ for some distance $r_1$; then it turns a right angle, and moves along $e_{i_2}$ for a distance $r_2$, and so on, and stops after finitely many turns. Thus, an (unparametrized) axis path $\gamma$ can be represented as: 
\begin{align} \label{representation}
\gamma = r_{1}e_{i_1} + \cdots + r_{n}e_{i_n}
\end{align}
where $r_i$'s are real numbers, with the sign denoting the direction\protect\footnote{We mean $-r e_{j} = r e_{j}^{-1}$.}. The sum is noncommutative. If $\gamma$ is already in its reduced form, then it is clear that $i_{k} \neq i_{k+1}$, and we call $(e_{i_1}, \cdots, e_{i_n})$ the shape of $\gamma$. We introduce the notion of square free words to characterize the shape of an axis path. 

\begin{defn}
Let $w = e_{i_1} \cdots e_{i_n}$ be a word. We call it a square free word if $\forall k \leq n-1$, $i_{k} \neq i_{k+1}$. 
\end{defn}

If a path has shape $(e_{i_1}, \cdots, e_{i_n})$, then by multiplicativity, its signature can be written as
\begin{align} \label{signature exponential}
X(\gamma) = e^{r_{1}e_{i_1}} \cdots e^{r_{n}e_{i_n}}, 
\end{align}
where the product is noncommutative. Let $w=e_{i_1} \cdots e_{i_n}$, then we have: 
\begin{align} \label{nonzero square free}
C(w)=r_{1} \cdots r_{k} \cdots r_{n}, 
\end{align}
and in particular, $C(w) \neq 0$. Moreover, if $w'$ is any other square free word with $C(w') \neq 0$, then $w'$ has length at most $n-1$, and we can thus recover the shape of the path by looking at its square free words. We state this observation in the proposition below. 

\begin{prop}
For any finite axis path $\gamma$, there exists a unique square free word $w$ such that $C(w) \neq 0$, and if $w'$ is any other square free word with $C(w') \neq 0$, then $|w'| < |w|$. If $w = e_{i_1} \cdots e_{i_n}$, then $\gamma$ has shape $(e_{i_1}, \cdots, e_{i_n})$. 
\end{prop}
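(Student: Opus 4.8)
The plan is to pass to the reduced axis path and then read the coefficients of square-free words directly off the exponential product \eqref{signature exponential}. Since replacing $\gamma$ by its reduced axis path removes only pieces of the form $\sigma * \sigma^{-1}$ and hence does not change the signature, I may assume $\gamma$ is reduced; by \eqref{representation} it then has a shape $(e_{i_1}, \ldots, e_{i_n})$ with $i_k \neq i_{k+1}$ and all $r_k \neq 0$. The word $w = e_{i_1}\cdots e_{i_n}$ is square-free, and \eqref{nonzero square free} already records that $C(w) = r_1\cdots r_n \neq 0$. It remains to show that every other square-free word with nonzero coefficient is strictly shorter; this single fact yields existence, uniqueness of the maximal word, and the shape-recovery claim at once.

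First I would expand the product \eqref{signature exponential}. Writing each factor as $e^{r_k e_{i_k}} = \sum_{m_k \geq 0} \frac{r_k^{m_k}}{m_k!} e_{i_k}^{m_k}$ and multiplying out, the coefficient $C(u)$ of a word $u$ is the sum of $\frac{r_1^{m_1}\cdots r_n^{m_n}}{m_1!\cdots m_n!}$ over all tuples $(m_1, \ldots, m_n)$ of nonnegative integers for which the concatenation $e_{i_1}^{m_1}\cdots e_{i_n}^{m_n}$ equals $u$.

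The key combinatorial observation is that when $u$ is square-free, only tuples with every $m_k \in \{0,1\}$ can contribute: a block $e_{i_k}^{m_k}$ with $m_k \geq 2$ produces two equal adjacent letters, so the resulting word is not square-free. Hence every square-free word $u$ with $C(u)\neq 0$ arises by choosing an increasing index set $k_1 < \cdots < k_p$ (the positions with $m_k = 1$) and reading off $e_{i_{k_1}}\cdots e_{i_{k_p}}$; that is, $u$ must be a subsequence of the shape, so $|u| \leq n$. Finally I would note that the only subsequence of length $n$ of a length-$n$ sequence is the whole sequence, whence $w = e_{i_1}\cdots e_{i_n}$ is the \emph{only} square-free word of length $n$ with nonzero coefficient, and any square-free $u \neq w$ with $C(u) \neq 0$ satisfies $|u| < n = |w|$. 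This proves the proposition and identifies the shape as $(e_{i_1}, \ldots, e_{i_n})$.

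The one point that demands care, rather than any hard estimate, is separating ``the word $u$ can appear'' from ``its coefficient survives.'' For a general square-free $u$, several distinct index-subsequences may yield the same $u$, so $C(u)$ could in principle vanish through cancellation among terms whose signs come from negative $r_k$. I expect this to be the main thing to state carefully, but it only strengthens the conclusion: the length bound $|u| \leq n$ holds regardless of cancellation, and for the maximal word $w$ there is exactly one contributing subsequence, so no cancellation can occur and $C(w)\neq 0$ is guaranteed. Thus the argument rests entirely on the block-squaring observation together with the triviality that the full sequence is its only length-$n$ subsequence.
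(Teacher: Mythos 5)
Your proposal is correct and follows essentially the same route as the paper: the paper also writes the signature as the noncommutative product $e^{r_1 e_{i_1}}\cdots e^{r_n e_{i_n}}$, reads off $C(w)=r_1\cdots r_n\neq 0$ for the shape word, and asserts that any other square-free word with nonzero coefficient is strictly shorter. You merely supply the justification (the block-squaring observation forcing $m_k\in\{0,1\}$, hence the subsequence bound, plus the remark about possible cancellation for shorter words) that the paper leaves implicit.
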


As a second step, we need to find out the $r_i$'s, assuming that we know the path has shape $(e_{i_1}, \cdots, e_{i_n})$. Let $w=e_{i_1} \cdots e_{i_k} \cdots e_{i_n}$, and $w_{k}=e_{i_1} \cdots e_{i_{k}}^{2} \cdots e_{i_n}$, then, 
\begin{align} \label{one square}
C(w_k) = \frac{1}{2}r_{1} \cdots r_{k}^{2} \cdots r_{n}, 
\end{align}
and compare with \eqref{nonzero square free}, we immediately get: 
\begin{align*}
r_{k} = \frac{2C(w_{k})}{C(w)}, 
\end{align*}
thus recovering the length of each straight line component. 

Combining the arguments above, we have the following theorem to invert a finite axis path from its signature: 

\begin{thm} \label{inversion for axis paths}
Let $\gamma$ be a finite axis path. Let $w = e_{i_1} \cdots e_{i_{k}} \cdots e_{i_n}$ be the unique longest square free word such that $C(w) \neq 0$. Let $w_{k} = e_{i_1} \cdots e_{i_k}^{2} \cdots e_{i_n}$, then we can write $\gamma$ as
\begin{align*}
\gamma = r_{1}e_{i_1} + \cdots + r_{n}e_{i_n}, 
\end{align*}
where $r_k = \frac{2C(w_k)}{C(w)}$, and the sum is noncommutative. 
\end{thm}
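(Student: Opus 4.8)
The plan is to combine the shape recovery already furnished by the Proposition with a short coefficient computation drawn from the exponential form of the signature. First I would invoke the Proposition: by hypothesis $w = e_{i_1} \cdots e_{i_n}$ is the unique longest square free word with $C(w) \neq 0$, so the Proposition guarantees that $\gamma$ has shape $(e_{i_1}, \ldots, e_{i_n})$. Thus the shape is determined, and it remains only to recover the signed lengths $r_1, \ldots, r_n$ of the straight-line segments.

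Next I would pass to the factored signature. Each segment $r_k e_{i_k}$ has signature $e^{r_k e_{i_k}} = \sum_{m \geq 0} (r_k^m / m!)\, e_{i_k}^{\,m}$, so the multiplicativity lemma yields \eqref{signature exponential}, namely $X(\gamma) = e^{r_1 e_{i_1}} \otimes \cdots \otimes e^{r_n e_{i_n}}$. Expanding this product, the coefficient of an arbitrary word is the sum, over tuples $(m_1, \ldots, m_n)$ of nonnegative integers, of $\prod_k (r_k^{m_k}/m_k!)$, where such a tuple contributes to the word $e_{i_1}^{m_1} \cdots e_{i_n}^{m_n}$ (dropping any factor with $m_k = 0$). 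The heart of the argument is to decide which tuples produce $w$ and which produce $w_k$.

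This is where the square free property does the work. Since consecutive shape letters differ ($i_j \neq i_{j+1}$), the word $w$ has $n$ runs each of length one, and the only tuple realizing it is $m_1 = \cdots = m_n = 1$: any other tuple summing to $n$ either has some $m_l \geq 2$, or has some $m_j = 0$ that merges neighbouring blocks, and in both cases the resulting word contains a run of length $\geq 2$ and so cannot equal the square free word $w$. This gives \eqref{nonzero square free}, $C(w) = r_1 \cdots r_n$. The same reasoning applied to $w_k$, whose run-length encoding has a single run of length two sitting at position $k$ and all others of length one, shows the only contributing tuple is $m_k = 2$ with $m_j = 1$ for $j \neq k$, yielding \eqref{one square}, $C(w_k) = \tfrac{1}{2} r_1 \cdots r_k^2 \cdots r_n$. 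Because $C(w) = r_1 \cdots r_n \neq 0$, every $r_j$ is nonzero, so dividing \eqref{one square} by \eqref{nonzero square free} produces $r_k = 2 C(w_k)/C(w)$, which is the claimed formula and completes the proof.

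I expect the genuinely delicate point to be the uniqueness of the contributing tuple, specifically ruling out the degenerate contributions in which setting some $m_j = 0$ causes the neighbouring blocks $e_{i_{j-1}}$ and $e_{i_{j+1}}$ to become adjacent and merge into a longer run. Controlling this requires comparing run-length encodings rather than merely counting letters, and it is exactly the square free hypothesis—applied to $w$, and to the fact that in $w_k$ the unique long run sits at the known position $k$—that guarantees no such degenerate tuple can reproduce the target word.
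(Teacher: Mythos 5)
Your proposal is correct and follows essentially the same route as the paper: recover the shape from the unique longest square free word via the Proposition, read off $C(w)=r_1\cdots r_n$ and $C(w_k)=\tfrac{1}{2}r_1\cdots r_k^2\cdots r_n$ from the factored signature $e^{r_1 e_{i_1}}\cdots e^{r_n e_{i_n}}$, and divide. The only difference is that you supply the run-length-encoding argument justifying why a unique tuple $(m_1,\ldots,m_n)$ contributes to each of $w$ and $w_k$, a detail the paper states without proof; this is a worthwhile addition and is carried out correctly.
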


Thus, if an axis path has $n$ turns, at most $n+2$ terms in the signature are needed for inversion. For a lattice path with length $L$, it can have at most $L-1$ turns, so we only need the first $L+1$ terms in the signature to recover it. 

In practice, lattice paths are often generated by drawing uniformly randomly $n$ letters and their inverses from an alphabet, and putting them in a row in the order they are drawn. It is then intersting to ask about the number of turns in its reduced path. We give uniform counting measure on all lattice paths with $n$ steps, and let $T_n$ denote the number of turns in the reduced path. The asymptotics of $T_{n}$ for large $n$ was computed by Jiang and Xu in \cite{Jiang and Xu}.

\bigskip

\section{Inversion for paths of bounded variation}

Given the signature $X=(1, X^{1}, \cdots)$ of a tree-reduced path of bounded variation, we will find a sequence of lattice paths $\{\hat{\gamma}^{(N)}\}$ whose signatures converge to $X$. We show that $\{\hat{\gamma}^{(N)}\}$ also necessarily converges uniformly, and thus obtain the original path as the uniform limit of this sequence of lattice paths. In this approach, the first and most important problem is that it is not at all clear whether there exists such a sequence of lattice paths. But once the existence is verified, one can find this sequence by checking all the possibilities, as there are only finitely many candidates for each $\hat{\gamma}^{(N)}$. Therefore, we will devote most of our efforts in proving the existence of such a sequence. Our precedure is as follows. 

\begin{enumerate}
\item Let $\gamma$ be a path of length $L$. For every integer $N$, we construct a lattice path $\hat{\gamma}^{(N)}$ on the same time interval with $\gamma$ with step size $\frac{1}{2^{N}}$ and length at most $L$ such that
\begin{align*}
\left\|  \hat{\gamma}^{(N)}(t) - \gamma(t)  \right\| \leq \frac{d}{2^{N}}
\end{align*}
for all $t$. In particular, $\{\hat{\gamma}^{(N)}\}$ converges to $\gamma$ uniformly. 

\item We will show that each $\hat{\gamma}^{(N)}$ constructed above satisfies
\begin{align} \label{convergence of the signatures}
\left\| X^{(n)}(\hat{\gamma}^{(N)}) - X^{n}  \right\| < \frac{d}{2^{N-1}} \cdot \frac{(4L)^{n-1}}{(n-1)!}
\end{align}
for all $n$. 

\item Suppose $X$ is the signature of a tree-reduced path $\gamma$ with length $L$. Then, in light of the previous two steps, one can find a sequence of lattice paths satisfying \eqref{convergence of the signatures}. This sequence, if located and parametrized properly, converges uniformly to $\gamma$. Thus, we are able to asymptotically recover any tree-reduced path from its signature by approximating it uniformly using lattice paths. 

\end{enumerate}

Note that the first two steps provides a verification for the existence of the sequence of lattice paths mentioned at the beginning of this section. Step 3 is a simple consequence of the previous two. We now start with the first step.

\bigskip

\subsection{Lattice path approximation to paths of bounded variation}

Let $\gamma: [0,1] \rightarrow \RR^{d}$ be a one dimensional path with length $L$. For any $x \in \RR$, let $c(x)$ denote the cardinality of the level set $\{t| \gamma_{i}(t)=x\}$. $c(x)$ is also the $0$-dimensional Hausdorff measure of the level set. Then, by the coarea formula (theorem 1 of section 3.4 in \cite{Evans and Gariepy}), we have
\begin{align*}
\int_{- \infty}^{+ \infty}c(x)dx = L, 
\end{align*}
or equivalently, 
\begin{align*}
\sum_{k = - \infty}^{+ \infty}\int_{0}^{\frac{1}{2^N}}{c(x + \frac{k}{2^N})dx} = L
\end{align*}
for all $N$. Since $c$ takes values on positive integers, monotone convergence theorem implies
\begin{align*}
\int_{0}^{\frac{1}{2^N}}{\sum_{k = -\infty}^{+ \infty}c(x + \frac{k}{2^N})dx} = L
\end{align*}
for all $N$. We then have the following lemma: 

\begin{lem} \label{existence of the point}
Let $\gamma$ be a path in $\RR$ with length $L$. Then, for any $N$, there exists an $x \in \RR$ such that
\begin{align} \label{one dimensional point}
\sum_{k = - \infty}^{+ \infty}c(x + \frac{k}{2^{N}}) \leq L \cdot 2^{N}. 
\end{align}
\end{lem}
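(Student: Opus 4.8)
The statement to prove is Lemma \ref{existence of the point}, which asserts the existence of a point $x$ making the sum over the shifted lattice of level-set cardinalities small. This is a clean averaging (pigeonhole / mean value) argument, and the setup in the excerpt has already done almost all the work.

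\textbf{The plan is to average the quantity in \eqref{one dimensional point} over $x$ and invoke the mean value property.} The excerpt establishes, via the coarea formula and monotone convergence, that
\begin{align*}
\int_{0}^{\frac{1}{2^N}}{\sum_{k=-\infty}^{+\infty} c\Big(x+\frac{k}{2^N}\Big)}\,dx = L.
\end{align*}
Define $g(x) := \sum_{k=-\infty}^{+\infty} c(x+\tfrac{k}{2^N})$. First I would observe that $g$ is periodic with period $\tfrac{1}{2^N}$, since shifting $x$ by $\tfrac{1}{2^N}$ merely reindexes the sum by $k \mapsto k+1$. Hence the displayed integral is exactly the integral of $g$ over one full period, which equals $L$.

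\textbf{Next I would apply the averaging principle on the interval $[0, \tfrac{1}{2^N}]$, whose length is $\tfrac{1}{2^N}$.} The average value of $g$ over this interval is
\begin{align*}
\frac{1}{\,1/2^N\,}\int_{0}^{\frac{1}{2^N}} g(x)\,dx = 2^N \cdot L = L\cdot 2^N.
\end{align*}
Since the average of $g$ over the interval equals $L \cdot 2^N$, there must exist at least one point $x$ in the interval at which $g(x) \leq L\cdot 2^N$; otherwise the integral would strictly exceed $\tfrac{1}{2^N}\cdot (L\cdot 2^N) = L$, contradicting the computed value. This yields exactly the claimed inequality \eqref{one dimensional point}.

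\textbf{The only point requiring a little care is the existence of the minimizing (or sub-average) point}, since $g$ need not be continuous and could in principle take the value $+\infty$. The averaging argument is robust to this: if $g(x) > L\cdot 2^N$ held for almost every $x$, the integral would be strictly larger than $L$, so the set $\{x : g(x)\leq L\cdot 2^N\}$ has positive measure and is in particular nonempty; one simply picks any $x$ in it. I expect this measurability/finiteness bookkeeping to be the main (and only mild) obstacle, and it is resolved purely by the positivity of $c$ and the finite value $L$ of the integral. No delicate estimate on $\gamma$ itself is needed beyond what the coarea formula already provides.
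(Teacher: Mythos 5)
Your proposal is correct and follows essentially the same route as the paper: the authors derive the identity $\int_{0}^{1/2^{N}}\sum_{k}c(x+\tfrac{k}{2^{N}})\,dx=L$ from the coarea formula and monotone convergence, and the lemma is then exactly the averaging/pigeonhole conclusion you spell out. Your added remark on measurability and the possibility of $g(x)=+\infty$ is a harmless (and correct) bit of extra care that the paper leaves implicit.
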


Now we let $\gamma$ live in $\RR^{d}$, and write $\gamma(t) = (\gamma_{1}(t), \cdots, \gamma_{d}(t))$. Since \eqref{one dimensional point} is true for all one dimensional paths, one can choose for each $i$ an $x_{i} \in \RR$ such that
\begin{align}
\sum_{k = - \infty}^{+ \infty}c(x_{i} + \frac{k}{2^{N}}) \leq l_{i} \cdot 2^{N}
\end{align}
for all $i$, where $l_{i} = |\gamma_{i}|$. This implies that, for each $i$, the number of times $\gamma$ hits the parallel hyperplanes $\tilde{x_i}=\{x_{i}+\frac{k}{2^{N}}\}_{k = -\infty}^{+\infty}$  is at most $l_{i} \cdot 2^{N}$. Let $x = (x_{1}, \cdots, x_{d})$, then the set of such $x$'s has positive Lebesgue measure, and one can choose an $x$ such that there is a unique $k = (k_1, \cdots, k_d) \in \ZZ^{d}$ satisfying
\begin{align*}
x + \frac{k_i}{2^{N}} < \gamma_{i}(0) < x + \frac{k_{i}+1}{2^N}, 
\end{align*} 
for all $i = 1, \cdots, d$. That is, the starting point of $\gamma$ is in the interior of a hypercube formed by the parallel hyperplanes $\tilde{x_i}=\{x_{i}+\frac{k}{2^{N}}\}_{k = -\infty}^{+\infty}$. 

We construct the lattice path $\hat{\gamma}^{(N)}$ as follows. Start at the center of the cube where $\gamma(0)$ lives; when $\gamma$ crosses a surface and moves to another (neighboring) cube, move $\hat{\gamma}^{(N)}$ one step of size $\frac{1}{2^N}$, from the current position to the center of that neighboring cube. When $\gamma$ does not move out of a cube, keep $\hat{\gamma}^{(N)}$ stayed at the center of a cube. Since $\gamma$ crosses the surfaces at most $L \cdot 2^{N}$ times, $\hat{\gamma}^{(N)}$ has at most $L \cdot 2^{N}$ steps. 

From that construction, one can parametrize $\hat{\gamma}^{N}$ such that $\hat{\gamma}^{N}(t)$ and $\gamma(t)$ are in the same hypercube of side length $\frac{1}{2^N}$ for all $t$. This leads to the following theorem. 

\begin{thm} \label{uniform approximation by lattice paths}
Let $\gamma: [0,1] \rightarrow \RR^{d}$ be a path with $l_1$ length $L$. For any integer $N$, there exists a lattice path $\hat{\gamma}^{(N)}: [0,1] \rightarrow \RR^{d}$ with step size $\frac{1}{2^{N}}$ and length at most $L$ such that 
\begin{align*}
|\hat{\gamma}^{(N)}(t)-\gamma(t)| \leq \frac{d}{2^{N}} 
\end{align*}
for all $t \in [0,1]$. 
\end{thm}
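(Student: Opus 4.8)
The plan is to assemble Theorem \ref{uniform approximation by lattice paths} directly from the construction that precedes it, treating the coarea argument and Lemma \ref{existence of the point} as the analytic engine and the lattice construction as the geometric packaging. First I would fix $N$ and invoke Lemma \ref{existence of the point} coordinate by coordinate: for each $i \in \{1, \dots, d\}$ the one-dimensional path $\gamma_i$ has length $l_i = |\gamma_i|$, so there is a shift $x_i \in \RR$ for which the number of crossings of the grid of hyperplanes $\{x_i + k/2^N\}_{k \in \ZZ}$ is at most $l_i \cdot 2^N$. Summing over $i$ and using $\sum_i l_i = L$ (this is exactly where the $l_1$ length enters, since the total number of hyperplane crossings across all $d$ families is at most $L \cdot 2^N$), I would note that the admissible offset vectors $x = (x_1, \dots, x_d)$ form a set of positive Lebesgue measure, so one may additionally require $\gamma(0)$ to lie in the open interior of a grid hypercube. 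This pins down the starting cube unambiguously.

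Next I would describe the lattice path $\hat{\gamma}^{(N)}$ precisely: it starts at the center of the hypercube containing $\gamma(0)$ and takes one step of size $1/2^N$ to the center of a neighboring cube each time $\gamma$ crosses a separating hyperplane, remaining stationary otherwise. Two bookkeeping facts then need to be verified. For the step count, every step of $\hat{\gamma}^{(N)}$ corresponds to a single hyperplane crossing of $\gamma$, and there are at most $L \cdot 2^N$ such crossings; since each step has length $1/2^N$, the total length of $\hat{\gamma}^{(N)}$ is at most $(L \cdot 2^N) \cdot (1/2^N) = L$, giving the length bound. For the uniform-distance bound, I would observe that by construction $\hat{\gamma}^{(N)}(t)$ sits at the center of whichever cube currently contains $\gamma(t)$; a cube has side $1/2^N$, so each coordinate satisfies $|\hat{\gamma}^{(N)}_i(t) - \gamma_i(t)| \leq 1/2^{N+1}$, and summing the $d$ coordinates under the $l_1$ norm yields $|\hat{\gamma}^{(N)}(t) - \gamma(t)| \leq d/2^{N+1} \leq d/2^N$.

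The point requiring the most care is the parametrization claim, namely that $\hat{\gamma}^{(N)}$ can be reparametrized on $[0,1]$ so that $\hat{\gamma}^{(N)}(t)$ and $\gamma(t)$ occupy the same hypercube \emph{simultaneously} for every $t$, not merely that the two paths trace out nearby cubes. This is the real content behind the bound: I would make $\hat{\gamma}^{(N)}$ jump to the new cube center at exactly the instant $t$ that $\gamma$ crosses the corresponding hyperplane, so that the two points change cubes in lockstep. One subtlety to handle is that $\gamma$ may cross several hyperplanes at the same instant (e.g. at a corner of the grid) or re-enter a cube it has just left; I would address this by ordering simultaneous crossings arbitrarily and inserting the corresponding steps into an arbitrarily short time subinterval, which is harmless since step timing does not affect the uniform bound as long as $\hat{\gamma}^{(N)}$ and $\gamma$ agree on the occupied cube at each $t$.

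I expect the main obstacle to be this timing/parametrization step, since the preceding text asserts it in one sentence (``one can parametrize $\hat{\gamma}^{N}$ such that $\hat{\gamma}^{N}(t)$ and $\gamma(t)$ are in the same hypercube'') without detailing the degenerate cases, and a fully rigorous argument must confirm that the lattice path can be realized as a genuine continuous path on $[0,1]$ with these properties despite multiple or rapid crossings. The coordinate-wise decoupling into $d$ independent applications of Lemma \ref{existence of the point} is routine, and the length and distance estimates are elementary counting once the construction is fixed; it is only the continuous-time synchronization of the two paths that needs to be argued with some attention.
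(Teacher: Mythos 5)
Your proposal is correct and follows essentially the same route as the paper: choose the grid offsets coordinatewise via Lemma \ref{existence of the point} so that the total number of hyperplane crossings is at most $L\cdot 2^N$, place $\gamma(0)$ in the interior of a cube, track $\gamma$ by cube centers, and read off the length bound from the crossing count and the uniform bound from same-cube containment. The only difference is that you spell out the synchronization of the step times with the crossing times (and the degenerate simultaneous-crossing case), which the paper asserts in a single sentence; your treatment is, if anything, the more careful one.
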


\bigskip

\subsection{Convergence of the signatures}

In this subsection, we will show that the signatures of the lattice paths constructed in the previous section converge to that of the original path. One crucial ingredient of the proof is an estimate of the difference of the signatures of two paths in terms of their lengths and uniform distance (theorem \ref{bounding the difference of signatures in terms of uniform distance}). 

Let $\alpha, \beta : [0,1] \rightarrow \RR^{d}$ be two paths of bounded variation with a common control $\omega$, and $|\alpha(t)-\beta(t)| < \epsilon$ for all $t \in [0,1]$. For any $N$, there exists a partition $\{0=t_{0}<t_{1}< \cdots < t_{N=1}\}$ of $[0,1]$ such that $\omega(t_{i},t_{i+1}) = \delta \leq \frac{L}{N}$ for all $i$, and one can decompose the paths as
\begin{align*}
\alpha = \alpha_{1} * \cdots * \alpha_{N}, \qquad \beta = \beta_{1} * \cdots * \beta_{N}, 
\end{align*}
where $\alpha_{i}$ and $\beta_{i}$ are restrictions of $\alpha$ and $\beta$ to time interval $[t_{i-1}, t_{i}]$.

By multiplicativity of the signature, we have
\begin{align*}
X(\alpha) &= X(\theta_{1} * \beta_{1} * \cdots * \theta_{N} * \beta_{N}), 
\end{align*}
where $\theta_{i} = \alpha_{i} * \beta_{i}^{-1}$. Then, $ \theta = \theta_{1} * \cdots * \theta_{N}$ satisfies
\begin{align} \label{uniform closeness}
\sup_{t}|\theta_{k+1} * \cdots * \theta_{k+p}(t)| < 2 \epsilon
\end{align}
for all $k$ and $l$ with $k+l \leq N$. Indeed, we have
\begin{align*}
X^{n}(\alpha) = \sum_{\sum_{k}{(i_{k}+ j_{k})} = n} X^{i_1}(\theta_{1}) \otimes X^{j_1} \cdots \otimes X^{i_{N}}(\theta_{N}) \otimes X^{j_{N}}(\theta_{N})
\end{align*}
Similar as before, we let
\begin{align*}
A_{N,n}^{(1)} = \{ (i_{k}, j_{k})_{k=1}^{N} | i_{k},j_{k} = 0 \phantom{1} \text{or} \phantom{1} 1 \phantom{1} \text{for all} \phantom{1} k, \phantom{1} \text{and} \phantom{1} i_{k} = 1 \phantom{1} \text{for some} \phantom{1} k  \}, \\
A_{N,n}^{(2)} = \{ (i_{k}, j_{k})_{k=1}^{N} | i_{k} \phantom{1} \text{or} \phantom{1} j_{k} \geq 2 \phantom{1} \text{for some} \phantom{1} k, \phantom{1} \text{and} \phantom{1} i_{k} \geq 1 \phantom{1} \text{ for some} \phantom{1} k  \}, 
\end{align*}
and 
\begin{align*}
S_{N,n}^{(1)} = \sum_{A_{N,n}^{(1)}} X^{i_{1}}(\theta_{1}) \otimes X^{j_{1}}(\beta_{1}) \otimes \cdots \otimes X^{i_{N}}(\theta_{N}) \otimes X^{j_{N}}(\beta_{N}), \\
S_{N,n}^{(2)} = \sum_{A_{N,n}^{(2)}} X^{i_{1}}(\theta_{1}) \otimes X^{j_{1}}(\beta_{1}) \otimes \cdots \otimes X^{i_{N}}(\theta_{N}) \otimes X^{j_{N}}(\beta_{N}), 
\end{align*}
then
\begin{align*}
X^{n}(\alpha) = X^{n}(\beta) + S_{N,n}^{(1)} + S_{N,n}^{(2)}. 
\end{align*}
Thus, we arrive in a similar situation as before. Before stating the main theorem, we first prove a lemma. 

\begin{lem}
Let $S_{N,n}^{(2)}$ be defined as above, and let $\delta < \frac{1}{2}$. Then, we have
\begin{align*}
\left\|  S_{N,n}^{(2)} \right\| \leq 2 \delta \frac{(4L + 2\delta(n-1))^{n-1}}{(n-2)!}
\end{align*}
\end{lem}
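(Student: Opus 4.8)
The plan is to bound $\|S_{N,n}^{(2)}\|$ by summing the projective norms of its constituent tensor products over the index set $A_{N,n}^{(2)}$, and then to control this sum by comparing it to a manageable power series. First I would recall the two basic estimates available for the building blocks. For the $\beta$-factors, the standard factorial bound on signatures of paths of bounded variation gives $\|X^{j}(\beta_i)\| \leq \omega(t_{i-1},t_i)^{j}/j! = \delta^{j}/j!$. For the $\theta_i = \alpha_i * \beta_i^{-1}$ factors, I would use two facts in tandem: the length bound $\|X^{i}(\theta_i)\| \leq (|\theta_i|)^{i}/i! \leq (2\delta)^{i}/i!$ (since $|\theta_i| \leq |\alpha_i| + |\beta_i| \leq 2\delta$), and crucially, for the \emph{first-order} term, the uniform-closeness estimate \eqref{uniform closeness}, which yields $\|X^{1}(\theta_i)\| = \sup_t |\theta_i(t)| \cdot (\text{something}) \le 2\epsilon$, i.e.\ the increment of $\theta_i$ is at most $2\epsilon$. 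Since the lemma is stated in terms of $\delta$ rather than $\epsilon$, I would set $\epsilon = \delta$ (matching the scale at which the partition is chosen) so that both $\|X^1(\theta_i)\| \le 2\delta$ and the length bound are consistent.

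The combinatorial heart of the argument is to organize the sum over $A_{N,n}^{(2)}$. By the definition of the projective norm, $\|S_{N,n}^{(2)}\|$ is at most the sum over all admissible index tuples of $\prod_i \|X^{i_k}(\theta_k)\|\,\|X^{j_k}(\beta_k)\|$. The key structural observation is that $A_{N,n}^{(2)}$ requires at least one index to be $\ge 2$ (this is what distinguishes it from $A_{N,n}^{(1)}$) and at least one $i_k \ge 1$. I would factor out the single ``anomalous'' slot: there is some $\theta$-increment that is being taxed at least once (supplying a factor $2\delta$), and the presence of an index $\ge 2$ supplies a second factor of order $\delta$, which together produce the prefactor $2\delta \cdot (\text{one more }\delta)$ and account for the $(n-2)!$ in the denominator rather than $(n-1)!$. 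The remaining slots are then summed freely, and their total contribution I would bound by expanding $\prod_{k}\exp\big(\|X^1(\theta_k)\|\cdot(\cdot) + \delta\big)$-type generating functions; concretely, summing $(2\delta)^{i_k}/i_k!$ and $\delta^{j_k}/j_k!$ over all nonnegative integers gives $e^{2\delta}$ and $e^{\delta}$ per block, and collecting the homogeneous degree-$n$ part reproduces a quantity of the form $(4L + 2\delta(n-1))^{n-1}/(n-1)!$ before the degree-counting correction.

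The cleanest route is probably to mimic whatever bookkeeping was used ``as before'' (the proof clearly parallels an earlier estimate for $S_{N,n}^{(1)}$): write the degree-$n$ coefficient as a constrained sum, use $\sum_i x^i/i! \le e^x$ termwise, and extract that the total length budget across all $2N$ factors is $N\cdot 2\delta$ (from the $\theta$'s) plus $N\cdot\delta \le 2L + \delta N$ (from the $\beta$'s), giving an effective length of order $4L$; the extra $2\delta(n-1)$ absorbs the discretization error incurred because each of the $n-1$ ordered integration slots can pick up an additional $\delta$ from the coarseness of the partition. I expect the main obstacle to be precisely this degree-counting: showing that the constraint ``some index $\ge 2$'' upgrades one factorial $\delta$ to the prefactor $2\delta$ while simultaneously downgrading $(n-1)!$ to $(n-2)!$, without losing or double-counting the anomalous slot. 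Making that step rigorous — rather than heuristic — is where care is needed; everything else reduces to the exponential generating-function bound and the two input estimates on $\|X^{i}(\theta_i)\|$ and $\|X^{j}(\beta_i)\|$.
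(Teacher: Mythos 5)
There is a genuine gap: the one step that actually proves the lemma is the one you explicitly defer. You correctly identify that the whole content of the estimate is the ``degree-counting'' — showing that the constraint ``some index $\geq 2$'' trades a factor of order $N$ in the count for the extra $\delta$ that turns $(3L)^n/n!$-type bounds into something that vanishes as $N \to \infty$, with $(n-2)!$ in the denominator. But you then write that making this rigorous ``is where care is needed'' and stop. Without that step there is no proof; everything else in your write-up (the factorial bounds on $\|X^{i}(\theta_k)\|$ and $\|X^{j}(\beta_k)\|$, the exponential generating-function heuristic) only reproduces the unconstrained bound $\sum \prod_k \frac{(2\delta)^{i_k}}{i_k!}\frac{\delta^{j_k}}{j_k!} \leq \frac{(3L)^n}{n!}$, which does not go to zero with $\delta$. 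Your ``factor out the anomalous slot'' idea can be made to work (choose the first slot with index $\geq 2$, pull out $(2\delta)^2/2$, sum the remaining degree $n-2$ freely, and multiply by the $2N$ choices of slot, so that $2N\cdot 2\delta^2 = 4\delta L$ supplies the vanishing prefactor), but as written it is a plan, not an argument. A secondary misstep: you invoke the uniform-closeness estimate $\|X^1(\theta_i)\| \leq 2\epsilon$ and propose to ``set $\epsilon = \delta$''. That identification is not legitimate — $\epsilon$ is the fixed uniform distance between $\alpha$ and $\beta$ while $\delta$ is the partition mesh, which is sent to $0$ — and in any case the uniform-closeness bound plays no role in this lemma (it is needed only for $S^{(1)}_{N,n}$); here the length bound $\|X^1(\theta_i)\| \leq |\theta_i| \leq 2\delta$ is all that is used.

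For comparison, the paper's route is cruder and more direct than what you sketch: it bounds \emph{every} term of $S^{(2)}_{N,n}$ uniformly by $(2\delta)^n$ (using only $\sum_k(i_k+j_k)=n$, no factorials retained), and then counts $|A^{(2)}_{N,n}|$ exactly as a difference of binomial coefficients — compositions of $n$ into $2N$ nonnegative parts minus those with all parts in $\{0,1\}$ — obtaining $|A^{(2)}_{N,n}| \leq \binom{2N+n-1}{n} - \binom{2N}{n} \leq \frac{(2N+n-1)^{n-1}}{(n-2)!}$. The product $(2\delta)^n\cdot\frac{(2N+n-1)^{n-1}}{(n-2)!} = 2\delta\,\frac{\bigl(2\delta(2N+n-1)\bigr)^{n-1}}{(n-2)!} \leq 2\delta\,\frac{(4L+2\delta(n-1))^{n-1}}{(n-2)!}$ then gives the stated bound (and explains where the $4L$ comes from: $2\delta\cdot 2N \le 4L$, not $2L+L$ as your generating-function accounting would suggest). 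So the mechanism you were searching for — how ``some index $\geq 2$'' downgrades $(n-1)!$ to $(n-2)!$ — is a purely combinatorial count of the index set, decoupled from the norm estimates. I would encourage you to either carry out that count or complete your factor-out-the-anomalous-slot argument in full.
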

\begin{proof}
By definition, a typical term in the sum of $S_{N,n}^{(2)}$ is bounded by
\begin{align*}
\left\| X^{i_{1}}(\theta_{1}) \otimes X^{j_{1}}(\beta_{1}) \otimes \cdots \otimes X^{i_{N}}(\theta_{N}) \otimes X^{j_{N}}(\beta_{N}) \right\|  &\leq \frac{(2 \delta)^{i_{1}}}{i_{1}!} \cdot \frac{\delta^{j_1}}{j_{1}!} \cdots \frac{(2 \delta)^{i_N}}{i_{N}!} \cdot \frac{\delta^{j_N}}{j_{N}!} \\
&\leq (2 \delta)^{n}, 
\end{align*}
where we have used $\sum_{k}(i_{k} + j_{k}) = n$ in the last line. Now we count the elements in $A_{N,n}^{(2)}$. 
Let $A_{N,n} = A_{N,n}^{(1)} \cup A_{N,n}^{(2)}$, then it is clear that
\begin{align*}
|A_{N,n}| &= \begin{pmatrix} n+2N-1\\ 2N-1 \end{pmatrix} - \begin{pmatrix} n+N-1\\ N-1 \end{pmatrix} \\
\end{align*}
On the other hand, we have
\begin{align*}
|A_{N,n}^{(1)}| = \begin{pmatrix} 2N\\ n \end{pmatrix} - \begin{pmatrix} N\\ n \end{pmatrix}
\end{align*}
Combining them together, we get
\begin{align*}
|A_{N,n}^{(2)}| &= |A_{N,n}| - |A_{N,n}^{(1)}| \\
&\leq \begin{pmatrix} 2N+n-1\\ n \end{pmatrix} - \begin{pmatrix} 2N\\ n \end{pmatrix} \\
&\leq \frac{1}{n!} [(2N+n-1)^{n} - (2N)^{n}] \\
&\leq \frac{1}{n!} (n-1) n (2N+n-1)^{n-1} \\
&= \frac{(2N+n-1)^{n-1}}{(n-2)!}. 
\end{align*}
Thus, we have
\begin{align*}
\left\|  S_{N,n}^{(2)} \right\| \leq 2 \delta \frac{(4L + 2\delta(n-1))^{n-1}}{(n-2)!}
\end{align*}

\end{proof}

Now we are ready to prove our main estimate. 

\begin{thm} \label{bounding the difference of signatures in terms of uniform distance}
Let $\alpha, \beta: [0,1] \rightarrow \RR^{d}$ be two paths of finite $l^{1}$ length with a common control $\omega$, and $|\alpha(t)-\beta(t)| < \epsilon$ for all $t \in [0,1]$.  Then, we have
\begin{align} \label{inequality of uniform bound}
\left\|  X_{s,t}^{n}(\alpha) - X_{s,t}^{n}(\beta) \right\| < 3 \epsilon \cdot \frac{(3 \omega(s,t))^{n-1}}{(n-1)!}
\end{align}
for $s, t \in (0,1)$ and all $n \in \NN$. 
\end{thm}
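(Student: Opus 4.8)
The plan is to start from the decomposition $X_{s,t}^n(\alpha) = X_{s,t}^n(\beta) + S_{N,n}^{(1)} + S_{N,n}^{(2)}$ established above, now carried out over the subinterval $[s,t]$ with total control $\omega(s,t)$ and mesh $\delta = \omega(s,t)/N$, and to bound the two error terms separately before letting $N \to \infty$. Since the left-hand side $X_{s,t}^n(\alpha) - X_{s,t}^n(\beta)$ does not depend on $N$, it suffices to control $\|S_{N,n}^{(1)}\| + \|S_{N,n}^{(2)}\|$ uniformly in $N$ and then pass to the limit. The term $S_{N,n}^{(2)}$ is already disposed of: the preceding lemma gives $\|S_{N,n}^{(2)}\| \le 2\delta\,(4\omega(s,t)+2\delta(n-1))^{n-1}/(n-2)!$, which tends to $0$ as $\delta = \omega(s,t)/N \to 0$. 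Hence in the limit only $S_{N,n}^{(1)}$ survives, and the entire estimate reduces to a good uniform bound on $\|S_{N,n}^{(1)}\|$.

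The heart of the argument is therefore the bound on $S_{N,n}^{(1)}$, where every tensor factor has degree $0$ or $1$ and at least one $\theta$-factor is nontrivial. Here the naive estimate $\|X^1(\theta_k)\| \le 2\delta$ is useless, since summing $2\delta$ over the $N$ intervals produces nothing of order $\epsilon$; the smallness must be extracted by summation by parts. The key input is the uniform-closeness bound \eqref{uniform closeness}: writing $\tau_k = X^1(\theta_k)$ and $T_j = \sum_{l\le j}\tau_l = X^1(\theta_1 * \cdots * \theta_j)$, the telescoping identity $\tau_k = (\alpha(t_k)-\beta(t_k)) - (\alpha(t_{k-1})-\beta(t_{k-1}))$ shows $|T_j - T_i| < 2\epsilon$ for all $i \le j$. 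That is, any block of consecutive $\theta$-increments has total displacement below $2\epsilon$, even though the length of that block may be of order $\omega(s,t)$.

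I would then organize $S_{N,n}^{(1)}$ by its maximal runs of consecutive $\theta$-factors, which are separated by the selected $\beta$-factors. Summing the increments inside one run of length $p$ gives the degree-$p$ term of the piecewise-linear path through the partial sums $T_j$; by Abel summation this run is bounded by $2\epsilon \cdot (2\omega(s,t))^{p-1}/(p-1)!$, so it carries exactly one factor of $\epsilon$ regardless of $p$. A term with $q \ge 1$ such runs then contributes a factor $\epsilon^q$, so the single-run terms dominate and all others are of strictly higher order in $\epsilon$; the surrounding $\beta$-factors are controlled by the standard factorial signature estimate $\|X^m(\beta|_{[a,b]})\| \le \omega(a,b)^m/m!$. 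Summing over the position and size of the run together with the adjacent $\beta$-blocks assembles the geometric/combinatorial series into a bound of the form $c_1\,\epsilon\,(c_2\,\omega(s,t))^{n-1}/(n-1)!$.

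Finally I would add the bound on $\|S_{N,n}^{(1)}\|$ to the vanishing bound on $\|S_{N,n}^{(2)}\|$ and let $N \to \infty$. Choosing the constants generously leaves room to absorb the factors of $2$ coming from $|T_j - T_i| < 2\epsilon$ and from $|\theta_k| \le 2\delta$, the higher-order multi-run terms, and the residual $S^{(2)}$, yielding the strict inequality $\|X_{s,t}^n(\alpha) - X_{s,t}^n(\beta)\| < 3\epsilon\,(3\omega(s,t))^{n-1}/(n-1)!$; note the case $n=1$ already reads $2\epsilon < 3\epsilon$, consistent with the telescoping bound. The step I expect to be the main obstacle is precisely this summation by parts: one must convert per-interval increments that are individually only $O(\delta)$ (and sum to $O(\omega)$) into a single factor of order $\epsilon$ using uniform closeness, while simultaneously preserving the factorial decay $1/(n-1)!$ and verifying that the multi-run terms are genuinely negligible. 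Keeping the bookkeeping of the interleaved $\theta$- and $\beta$-blocks clean enough to resum the resulting series is where the real work lies.
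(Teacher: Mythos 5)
Your overall skeleton coincides with the paper's: the same decomposition $X^{n}(\alpha)=X^{n}(\beta)+S^{(1)}_{N,n}+S^{(2)}_{N,n}$, disposal of $S^{(2)}_{N,n}$ via the preceding lemma as $\delta\to 0$, and the passage $N\to\infty$ at the end. Where you genuinely diverge is the estimate of $S^{(1)}_{N,n}$. The paper anchors each term at the \emph{rightmost} $\theta$-factor, fixes the realization of all other factors, and sums only over the index of that one factor, so the uniform-closeness bound \eqref{uniform closeness} is invoked exactly once per term, contributing a single $2\epsilon$, while every remaining $\theta$-factor is bounded crudely by $2\delta$ (whence the $2^{p}\delta^{n-1}$ and the purely binomial counting). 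Your plan instead resums each maximal run of consecutive $\theta$-factors into the degree-$p$ signature of the piecewise-linear path through the partial sums $T_{j}$, extracting a factor $2\epsilon$ from every run. Both routes rest on the same key input \eqref{uniform closeness} and the same factorial decay for the $\beta$-blocks, and both can be pushed through; the paper's choice reduces the bookkeeping to one binomial count at the price of the slightly wasteful $2^{p}$, while yours gives sharper per-block bounds at the price of a multi-block resummation.

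One step of your plan as stated does not survive scrutiny: a term with $q\ge 2$ runs carries a factor $(2\epsilon)^{q}$, and being ``of higher order in $\epsilon$'' does not make it negligible, since the theorem is asserted for an arbitrary uniform bound $\epsilon$; if $\epsilon$ is comparable to or larger than $\omega(s,t)$, the multi-run terms would overwhelm any fixed bound of the form $C_{1}\epsilon\,(C_{2}\omega(s,t))^{n-1}/(n-1)!$. The patch is to observe that each run also obeys a length-based bound (the piecewise-linear path through the $T_{j}$ over a gap has total variation at most twice the control of that gap), so that all but one factor of $2\epsilon$ can be traded for a factor of order $\omega$; equivalently, use the $\epsilon$-smallness only once per term, which is exactly what the paper's rightmost-factor device accomplishes automatically. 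With that correction your argument closes and yields constants of the advertised form.
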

\begin{proof}
In light of the previous lemma, by sending $\delta$ to $0$, it suffices to prove the bound for $\left\|  S_{N,n}^{(1)} \right\|$. A typical term in the sum of $S_{N,n}^{(1)}$, denoted by $W^{n}$, is the tensor product of $n$ ordered terms of $X^{1}(\theta_{i})$ and $X^{1}(\beta_{j})$'s, and consists of at least one $X^{1}(\theta_{i})$. Now fix $W^{n}$. Suppose the rightmost $X^{1}(\theta_{i})$ in the product appears at position $k$, then $W^{n}$ has the form
\begin{align*}
W^{n} = W^{n}(Y^{k-1},Z^{n-k}) = Y^{k-1}(\theta, \beta) \otimes X^{1}(\theta_{i}) \otimes Z^{n-k}(\beta), 
\end{align*}
where $Y^{k-1}(\theta, \beta)$ is the tensor product of the first $k-1$ ordered terms, each term being $X^{1}(\theta_{i})$ or $X^{1}(\beta_{j})$, and $Z^{n-k}(\beta)$ is the tensor product of the last $n-k$ terms, which by assumption are all of the form $X^{1}(\beta_{j})$. 

We first restrict to all $W^{n}$'s such that the rightmost $X^{1}(\theta_{i})$ appears at position $k$, where $k$ is any integer between $1$ and $n$. Denote this set as $E_{k}$. The collections of $W^{n}$'s with the same feasible choice $(Y,Z) = (Y^{k-1},Z^{n-k})$ os a sibset pf $E_{k}$, denoted by $E_{k}(Y,Z)$. For every feasible choice of $(Y^{k-1},Z^{n-k})$, we can sum up all $W^{n}(Y,Z)$'s in $E_{k}$ with the same realization $(Y^{k-1},Z^{n-k})$. By linearality, we can sum together the terms $X^{1}(\theta_{i})$ on the position $k$ to get
\begin{align*}
\sum_{E_{k}(Y,Z)} W^{n}(Y,Z) = Y^{n-1}(\theta, \beta) \otimes \bigg(\sum_{i=l+1}^{l+p} X^{1}(\theta_{i}) \bigg) \otimes Z^{n-k}(\beta), 
\end{align*}
where the sum is taken over $E_{k}(Y,Z)$. Now suppose the product $Y^{k-1}(\theta, \beta)$ consists of $p$ terms of $X^{1}(\theta_i)$'s and $k-1-p$ terms of $X^{1}(\beta_j)$'s. Since 
$\left\| X^{1}(\theta_{i}) \right\| \leq 2 \delta$, $\left\| X^{1}(\beta_{j}) \right\| < \delta$, and also by assumption
\begin{align*}
\left\|  \sum_{i=l+1}^{l+p} X^{1}(\theta_{i})  \right\| < 2 \epsilon, 
\end{align*}
it is straightforward that
\begin{align*}
\left\| W^{n}(Y^{k-1},Z^{n-k})  \right\| < 2 \epsilon \cdot 2^{p} \cdot \delta^{n-1}. 
\end{align*}

We first consider the case $k=n$. In this case, if there are $p$ terms of $X^{1}(\theta_i)$'s and $n-1-p$ terms of  $X^{1}(\beta_{j})$'s in the product of the first $n-1$ terms, the number of such chocies are
\begin{align*}
\begin{pmatrix} N-1\\ p \end{pmatrix} \cdot \begin{pmatrix} N-1\\ n-1-p \end{pmatrix} < \frac{N^{n-1}}{p!(n-1-p)!}. 
\end{align*}
Thus, we have
\begin{align*}
\left\| \sum_{E_{n}}W^{n}(Y^{n-1},Z^{0})  \right\| &\leq \sum_{E_{n}} \left\| W^{n}(Y^{n-1},Z^{0}) \right\| \\
&\leq \sum_{p=0}^{n-1} 2 \epsilon \cdot 2^{p} \cdot \delta^{n-1} \cdot \frac{N^{n-1}}{p!(n-1-p)!} \\
&= 2 \epsilon \cdot \frac{(3L)^{n-1}}{(n-1)!}
\end{align*}

Now we consider the case $k \leq n-1$. Same as before, we assume there are $p$ terms of $X^{1}(\theta_{i})$'s and $k-1-p$ terms of $X^{1}(\beta_{j})'s$ in the first $k-1$ terms. In addition, we let call the $k+1$-th term $X^{1}(\beta_{l})$. The above setting implicit assumes the following relationships: 
\begin{align*}
\frac{k-1}{2} \leq l \leq N-(n-k), \qquad \max\{0, k-1-l\} \leq p \leq \min\{k-1,l\}. 
\end{align*}
With this setting, we have
\begin{align*}
\left\|  \sum_{k=1}^{n-1} \sum_{E_{k}} W^{n}(Y^{k-1},Z^{n-k})  \right\|  &\leq  2 \epsilon \cdot \delta^{n-1} \sum_{k=1}^{n-1} \sum_{l = \lfl \frac{k-1}{2} \rfl}^{N-(n-k)} \sum_{p = \max\{0,k-1-l\}}^{\min\{k-1,l\}} 2^{p} \cdot \begin{pmatrix} l\\ p \end{pmatrix} \cdot \begin{pmatrix} l\\ k-1-p \end{pmatrix} \cdot \begin{pmatrix} N-l\\ n-k-1 \end{pmatrix} \\
&\leq 2 \epsilon \cdot \delta^{n-1} \sum_{k=1}^{n-1} \sum_{l=0}^{N-1} \sum_{p=0}^{k-1} 2^{p} \cdot \frac{l^{k-1}}{p! (k-1-p)!} \cdot \frac{(N-l)^{n-k-1}}{(n-k-1)!} \\
&= 2 \epsilon \cdot \delta^{n-1} \sum_{l=0}^{N-1} \sum_{k=1}^{n-1} \frac{(3l)^{k-1}}{(k-1)!} \cdot \frac{(N-l)^{n-2-(k-1)}}{(n-2-(k-1))!} \\
&= 2 \epsilon \cdot \frac{\delta^{n-1}}{(n-2)!} \sum_{l=0}^{N-1}(N+2l)^{n-2} \\
&\leq 2 \epsilon \cdot \frac{\delta^{n-1}}{(n-2)!} \int_{1}^{N}{(N+2x)^{n-2}dx} \\
&\leq \epsilon \cdot \frac{(3L)^{n-1}}{(n-1)!}
\end{align*}

Combining the above estimate with the case $k=n$, we get
\begin{align*}
\left\| S_{N,n}^{(1)} \right\| \leq 3 \epsilon \cdot \frac{(3L)^{n-1}}{(n-1)!}. 
\end{align*}

Sending $N \rightarrow +\infty$, by the previous lemma, we conclude that
\begin{align*}
\left\| X^{\alpha} - X^{n}(\beta) \right\| \leq 3 \epsilon \cdot \frac{(3L)^{n-1}}{(n-1)!}. 
\end{align*}

\end{proof}

The right hand side of \eqref{inequality of uniform bound} appears like the bound for signature of degree $n$. This is because one needs to compensate the uniform distance $\epsilon$ with one degree of signature. It is worthy to note that, the scale of $n-1$ in the bound cannot be both improved to $n$. To see this, consider the one dimensional path $\gamma(t) = \epsilon t, t \in [0,1]$. Then, 
\begin{align*}
\left\| X_{s,t}^{n}(\gamma) \right\| = \epsilon^{n} \frac{(t-s)^{n}}{n!}. 
\end{align*}
If the bound were at the scale of $n$, then it would be $\epsilon^{n+1} \frac{(C(t-s))^{n}}{n!}$ for some constant $C$ indepedent of $n$. One can choose $\epsilon$ small enough so that $\left\|  X_{s,t}^{n}(\gamma) \right\|$ exceeds this bound. So, it is not possible to improve the right hand side of \eqref{inequality of uniform bound} to the scale of $n$.

For the lattice path approximation in the previous section, we have $|\hat{\gamma}^{N}(t)-\gamma(t)| \leq \frac{1}{2^N}$, so we immediately get the following corollary.

\begin{cor}
Let $\gamma$ be a path in $\RR^{d}$ of length $L$, and $\{\hat{\gamma}^{(N)}\}$ be a sequence of lattice paths, each with step size $\frac{1}{2^{N}}$ and length at most $L$, and satisfying
\begin{align*}
|\hat{\gamma}^{(N)}(t) - \gamma(t)| < \frac{d}{2^{N}}
\end{align*}
for all $t$, as constructed in the previous section. Then, 
\begin{align*}
\left\|  X^{n}(\hat{\gamma}^{(N)}) - X^{n}(\gamma) \right\| < \frac{3d \cdot (3L)^{n-1}}{2^{N} \cdot (n-1)!}. 
\end{align*}
\end{cor}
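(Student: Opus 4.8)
The plan is to read off the corollary as the specialization of Theorem \ref{bounding the difference of signatures in terms of uniform distance} to the pair $\alpha = \hat{\gamma}^{(N)}$ and $\beta = \gamma$. The uniform-closeness hypothesis is supplied directly by the construction of the previous subsection (recorded in Theorem \ref{uniform approximation by lattice paths}): one has $|\hat{\gamma}^{(N)}(t) - \gamma(t)| < d/2^{N}$ for every $t$, so the role of $\epsilon$ in the theorem is played by $d/2^{N}$. Granting a common control $\omega$ with $\omega(0,1) = L$, I would apply the theorem on $(s,t)=(0,1)$ (letting $s \downarrow 0$ and $t \uparrow 1$ and invoking continuity of the signature in its endpoints) to obtain
\begin{align*}
\left\| X^{n}(\hat{\gamma}^{(N)}) - X^{n}(\gamma) \right\| < 3 \cdot \frac{d}{2^{N}} \cdot \frac{(3\,\omega(0,1))^{n-1}}{(n-1)!} = \frac{3 d\,(3L)^{n-1}}{2^{N}\,(n-1)!},
\end{align*}
which is exactly the asserted bound. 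Thus the entire content of the corollary reduces to producing a single additive, continuous control $\omega$ that dominates the lengths of both $\gamma$ and $\hat{\gamma}^{(N)}$ and satisfies $\omega(0,1) = L$.

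The natural candidate is $\gamma$'s own $l^{1}$ length functional $\omega(s,t) := |\gamma|_{[s,t]}$, which is additive and, for a path of bounded variation, continuous, with $\omega(0,1)=L$. It automatically controls $\gamma$, so the only point to verify is that it also controls the lattice path, i.e. that $|\hat{\gamma}^{(N)}|_{[s,t]} \le |\gamma|_{[s,t]}$ on every subinterval. I would argue coordinate by coordinate: a step of $\hat{\gamma}^{(N)}$ in direction $i$ is triggered exactly when $\gamma$ passes from one cube to a neighbouring cube across a hyperplane of the family $\tilde{x}_i = \{x_i + k/2^{N}\}$, so $|\hat{\gamma}^{(N)}|_{[s,t]}$ equals $2^{-N}$ times the number of such cube transitions on $[s,t]$, and the task is to compare this count with $|\gamma_i|_{[s,t]}$.

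The main obstacle is precisely this local domination. Globally it follows from Lemma \ref{existence of the point}: at the chosen offset the total number of grid hits of $\gamma_i$, hence the total number of steps, is at most $2^{N} l_i$, giving $|\hat{\gamma}^{(N)}| \le L$. On a subinterval, however, the analogous estimate does not follow from the coarea identity alone, since $\sum_k c_{[s,t]}(x_i + k/2^{N})$ is a sampled sum rather than the integral $\int c_{[s,t]}(y)\,dy = |\gamma_i|_{[s,t]}$; a coordinate oscillating across a single hyperplane with small amplitude can generate many cube transitions while contributing little variation. I would therefore either (i) strengthen the offset selection to control the per-interval count — exploiting that two consecutive \emph{same-direction} transitions force $\gamma_i$ to traverse a full cube of width $2^{-N}$, and using genericity of the offset to rule out the pathological grazing of a hyperplane that produces cheap oscillatory in-and-out pairs — thereby recovering $|\hat{\gamma}^{(N)}|_{[s,t]} \le |\gamma|_{[s,t]}$; or (ii) if a clean local bound resists, retreat to the common control $\omega = |\gamma|_{[\cdot]} + |\hat{\gamma}^{(N)}|_{[\cdot]}$ with $\omega(0,1)\le 2L$, which still yields convergence at the slightly weaker rate $(6L)^{n-1}$. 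Making route (i) rigorous, so as to preserve the stated constant $3L$, is the step I expect to demand the most care.
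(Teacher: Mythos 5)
Your proposal takes exactly the route the paper does: the corollary is obtained by applying Theorem \ref{bounding the difference of signatures in terms of uniform distance} with $\alpha = \hat{\gamma}^{(N)}$, $\beta = \gamma$, $\epsilon = d/2^{N}$ and a common control satisfying $\omega(0,1)=L$; the paper offers no further argument, saying only that the corollary follows ``immediately.'' The one place you go beyond the paper --- worrying about whether $|\gamma|_{[s,t]}$ genuinely controls $\hat{\gamma}^{(N)}$ on every subinterval, since an oscillation of $\gamma_i$ grazing a single hyperplane can produce many cube transitions of total lattice length far exceeding $|\gamma_i|_{[s,t]}$ --- is a legitimate subtlety that the paper simply does not address, and your fallback $\omega = |\gamma|_{[\cdot]}+|\hat{\gamma}^{(N)}|_{[\cdot]}$ (costing only a constant, $(6L)^{n-1}$ in place of $(3L)^{n-1}$) is a sound way to close it without affecting the use of the corollary in the inversion argument, where only the rate in $N$ matters.
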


\bigskip

\subsection{Inversion}

Let $X = (1, X^{1}, X^{2}, \cdots)$ be the signature of a path $\gamma$. Suppose further that $\gamma$ is tree-reduced and has $L$\protect\footnote{Note that in the approximation scheme, we require the lengths of all lattice paths to be bounded by a real number $L'$. If $L' > L$, we can lower the bound $L'$ until it reaches $L$. So in fact we can approach $L$ arbitrarily closely. }. By the construction and estimates previous two sections, for every $N$, there exists a lattice path $\hat{\gamma}^{(N)}$ with step size $\frac{1}{2^{N}}$ and length at most $L$ such that \eqref{convergence of the signatures} holds. The uniform bound on the lengths implies that there are only finitely many candidates for each $\hat{\gamma}^{(N)}$, so it guarantees one can find such a sequence by checking all the possibilities at each level $N$. 

This sequence $\{\hat{\gamma}^{(N)}\}$, if all starting from the origin, and parametrized at unit speed, is uniformly bounded and equicontinuous. Thus, it contains a uniformly convergent subsequence. Denote its limit by $\hat{\gamma}$. By theorem \ref{bounding the difference of signatures in terms of uniform distance}, for each $n$, we have
\begin{align*}
X^{n}(\hat{\gamma}^{(N)}) \rightarrow X^{n}(\hat{\gamma})
\end{align*}
as $N \rightarrow +\infty$, and thus we get
\begin{align*}
X(\hat{\gamma}) = X(\gamma), 
\end{align*}
so $\hat{\gamma} \sim \gamma$ by the uniqueness theorem of Hambly and Lyons. 

On the other hand, as a uniform limit, we have
\begin{align*}
|\hat{\gamma}| \leq L, 
\end{align*}
but $L$ is the length of the tree reduced path $\gamma$, so we must have
\begin{align*}
\hat{\gamma} = \gamma. 
\end{align*}
Thus, every uniformly convergent subsequence of $\{\hat{\gamma}^{(N)}\}$ has the same limit $\gamma$, and so the whole sequence must also converge uniformly to $\gamma$. In this way, we can approach $\gamma$ arbitrarily close in uniform distance, and invert the signature asysmptotically.

\bigskip

\bigskip

\textsc{Mathematical Institute, University of Oxford, 24-29 St. Giles', Oxford, OX1 3LB, UK}

\bigskip

\textit{Email address}: tlyons@maths.ox.ac.uk

\bigskip

\textit{Email address}: xu@maths.ox.ac.uk

\end{document}